\newtheorem{Theorem}{Theorem}[section]
\newtheorem{Lemma}[Theorem]{Lemma}
\newtheorem{Proposition}[Theorem]{Proposition}
\newtheorem{Corollary}[Theorem]{Corollary}
\newtheorem{Conjecture}[Theorem]{Conjecture}
\theoremstyle{definition}
\newtheorem{Definition}[Theorem]{Definition}
\newtheorem{Example}[Theorem]{Example}
\newtheorem{Remark}[Theorem]{Remark}
\newtheorem{Claim}{Claim}
\providecommand{\customgenericname}{}
\newcommand{\newcustomtheorem}[2]{%
	\newenvironment{#1}[1]
	{%
		\renewcommand\customgenericname{#2}%
		\renewcommand\theinnercustomgeneric{##1}%
		\innercustomgeneric
	}
	{\endinnercustomgeneric}
}
\newcommand\n{\mathbb N}
\newcommand{\N}{{\mathbb N}}
\newcommand{\Z}{{\mathbb Z}}
\begin{document}

\title[Linear Dynamics Induced by  Odometers]{Linear Dynamics Induced by Odometers}

\author{D. Bongiorno}
\address{Dipartimento di Ingegneria, Universit\`{a} degli Studi di Palermo}
\curraddr{Viale delle Scienze, 90100 Palermo, Italy}
\email{donatella.bongiorno@unipa.it}

\author{E. D'Aniello}
\address{Dipartimento di Matematica e Fisica, Universit\`{a} degli Studi della Campania "Luigi Vanvitelli"}
\curraddr{Viale Lincoln 5, 81100 Caserta, Italy}
\email{emma.daniello@unicampania.it}

\author{U. B. Darji}
\address{Department of Mathematics, University of Louisville}
\curraddr{Louisville, KY 40292, USA}
\email{udayan.darji@louisville.edu}

\author{L. Di Piazza}
\address{Dipartimento di Matematica ed Informatica, Universit\`{a} degli Studi di Palermo}
\curraddr{Via Archirafi 34, 90100 Palermo, Italy}
\email{luisa.dipiazza@unipa.it}



\thanks{This research has been partially supported by the Gruppo Nazionale per l'Analisi Matematica, la Probabilit\`a e le loro Applicazioni (GNAMPA) of the Istituto Nazionale di 
Alta Matematica (INdAM) (Project 2018   ``Metodi di approssimazione mediante somme integrali e sistemi dinamici caotici'').} 
\date{}
\subjclass{Primary: 47B33, 37B20; Secondary 54H20.}
\keywords{linear dynamics, composition operators, topological mixing, topological transitivity, odometers.  }
\maketitle
     
     \begin{abstract}
       Weighted shifts are an important concrete class of operators in linear dynamics. In particular, they are an essential tool in distinguishing a variety dynamical properties. Recently, a systematic study of dynamical properties of composition operators on $L^p$ spaces has been initiated. This class of operators includes weighted shifts and also allows flexibility in construction of other concrete examples. In this article, we study one such concrete class of operators, namely composition operators induced by measures on odometers. In particular, we study measures on odometers which induce mixing and transitive linear operators on $L^p$ spaces.
     \end{abstract}
\maketitle
\section{Introduction}
Linear dynamics is a relatively recent area of mathematics which lies at the intersection of operator theory and dynamical systems. A flurry of intriguing results have been obtained in this area starting with investigation of transitivity and mixing. Recent investigations include concepts such as Li-Yorke, Devaney and distributional chaos, invariant measures, ergodicity and frequently hypercyclic, expansive, hyperbolic, shadowing and structural stability. We refer the reader to books \cite{BM} and \cite{GP} for general information on the topic. 

 A class of operators which plays a key  role in linear dynamics is the class of weighted shifts. This class was introduced by Rolewicz \cite{RO}. Let us briefly recall their definitions. Let $A$ be $\N$ or $\Z$. Let $\{w _i\}_{i \in A}$ be  a bounded sequence of positive reals called the
{\em weight sequence}. Then, the backward weighted shift on $X = \ell_p(A)$ $(1 \leq p < \infty)$ or $X = c_0(\Z)$ is a mapping $B_w: X \rightarrow X$ defined by
\[
B_w\big((x_n))(i) = w_{i+1}x_{i+1}.
\]
Salas (\cite{S1}, \cite{S2}) initiated the study of weighted shifts which are transitive and mixing. His results were generalized and extended by various authors \cite{G0}, \cite{MP}, \cite{CS}. Characterizations obtained in these articles readily allow examples and counterexamples. For example, for weighted shifts on $\N$,  the operator $B_w$ is transitive if and only if 
\[ \sup_n w_1 \ldots w_n = \infty, \]
and, mixing if and only if
\[ \lim_n w_1 \ldots w_n = \infty. \]
By these characterizations one can construct with ease an operator which is transitive but not mixing. Characterizations for weighted shifts which are Li-Yorke chaotic were given in \cite{BBM} and  \cite{BDP}. In \cite{BCDMP}, using their characterization of weighted shifts which are expansive, the authors were able to settle negatively an open problem, i.e, there exists an operator with the  shadowing property which is not hyperbolic. In \cite{BerMess}, using their characterization of shadowing, the authors were able to construct a structurally stable operator with the shadowing property which is not hyperbolic.  
In \cite{BR}, the authors characterize weighted shifts on $c_0$ which are frequently hypercyclic, settling negatively the open problem of whether ${\mathcal U}$-frequently hypercyclic and frequently hypercyclic are equivalent notions. These characterizations were used \cite{gm} to construct a frequently hypercyclic weighted shift on $c_0$ which admits no invariant ergodic measure with full support.

Authors of \cite{BDP} and \cite{BDP2} initiated a  systematic study of dynamical properties of composition operators. In \cite{BDP}, necessary and sufficient conditions were given for a composition operator to be transitive and mixing. Necessary and sufficient conditions for a composition operator to be Li-Yorke chaotic were given in \cite{BDP2}. These results include earlier results of Salas as every weighted shift is topologically conjugate to a composition operator. The class of composition operators also include Rolewicz type operators introduced in \cite{BDD}. 

In this article, we study a special class of composition of operators. We study composition operators induced by odometers.

Odometers appear by various names in a wide range of topics. They are also called adding machines or solenoids. One of the earliest uses of odometers in ergodic theory was by Ornstein \cite{ORN}. He showed that there is an invertible transformation with a non-singular probability measure $\mu$ for which there is no $\sigma$-finite invariant measure $\nu$ which is equivalent to $\mu$. (See (\cite{DASI}: Example 5) for Ornstein example with the full proof). 

Example 2.4 in \cite{DS} contains a construction in modern terminology. Odometers also appear abundantly in topological dynamics. For example, it was shown in \cite{HOC} that a generic transitive homeomorphism of the Cantor space is topologically conjugate to the universal odometer. Also in \cite{DDS} and \cite{DHS} it was shown that the omega-limit set generated by a generic map and a generic point on a manifold is topologically conjugate to the universal odometer. 

We choose an arbitrary odometer. On each coordinate space, we put a probability measure $\mu_i$ and consider the product measure $\mu$ on the odometer. We consider  the composition operator $T_f: L^p (\mu) \rightarrow L^p (\mu)$ defined by $T_f(\varphi) = \varphi \circ f$ where $f$ is the $+1$ map on the  odometer.  We give conditions on the $\mu_i$'s which guarantee that the composition operator is transitive and mixing. The spirit of our approach is that of weighted shifts, i.e., a concrete class of operators which hopefully becomes an indispensable tool in linear dynamics. 

The article is organized as follows: in Section~2 we recall basic definitions and background results; in Section~3, we state our main results; in Sections 4 and 5 we give their proofs. 
\section{Basic Notions and Background results}
Throughout the paper by $\N$ we denote the non-negative integers. 
We start by recalling notions of transitivity and  mixing. 
 \subsection{Transitivity and Mixing} 
 \begin{Definition} (Topological Transitivity) A bounded linear operator $T: {\mathcal X}  \rightarrow {\mathcal X}$ acting on a Banach space ${\mathcal X}$ is \textit{topologically transitive} if for 
 any pair $U, V \subseteq {\mathcal X}$ of nonempty open sets, there exists an integer $k \geq 1$ such that $T^{k}(U) \cap V \not= \emptyset$. 
\end{Definition}
In a separable Banach space, a bounded linear operator is transitive if and only if it is hypercyclic, i.e., the operator has a dense orbit. For more information on linear dynamics we refer the reader to \cite{BM} and \cite{GP}.

 \begin{Definition} (Topological Mixing) A bounded linear operator $T: {\mathcal X}  \rightarrow {\mathcal X}$ acting on a Banach space ${\mathcal X}$ is \textit{topologically mixing} if 
 for any pair $U, V \subseteq {\mathcal X}$ of nonempty open sets, there exists an integer $k_{0} \geq 1$ such that $T^{k}(U) \cap V \not= \emptyset$ for every $k \geq k_{0}$. 
 \end{Definition}

We often drop the adjective topological and simply say transitive or mixing. 
It is clear that mixing implies transitivity. An abundance of examples exist showing that the converse is false. For instance, see  \cite{SM} and \cite{BM2}. 

\subsection{Composition Operators}
Let $(X, {\mathcal S}, \mu)$ be a $\sigma$-finite measure space and $g: X \rightarrow X$ be measurable.
Let $1 \leq p < \infty$. Then, $T_{g}: L^{p}(\mu) \rightarrow L^{p}(\mu)$ defined by $T_{g}: \phi \mapsto {\phi} \circ g$ is a bounded linear operator if and only if it satisfies the condition:
\[\tag{*} \exists c >0 \ \ \ \left [B \in {\mathcal S} \implies \mu(B) \geq c \mu(g^{-1}(B) ) \right].  \]
For a proof of this and general information about composition operators, we refer the reader to \cite{SM}. 

Characterizations of topological transitivity  and topological mixing for composition operators were given in \cite{BDP}. We will use these characterizations extensively. Below we state them in the specific form we will use.

\begin{Theorem}\label{MixingBDP} Let $X=(X,{\mathcal S},\mu)$ be a finite measure space and $g:X\to X$ be an one-to-one bimeasurable transformation  satisfying $(*)$. 
Then, the composition operator $T_g: L^p(X) \rightarrow L^p(X)$ is topologically mixing if and only if 
for each $\varepsilon>0$, there exist $k_0\ge 1$ and measurable sets $\{B_k\}_{k=k_0}^\infty$ such that 
\[\mu(X{\setminus} B_k)< \varepsilon\quad\textrm{and}\quad B_k\cap g^k(B_k)=\emptyset\quad\textrm{for every}\quad k\ge k_0;\]
\end{Theorem}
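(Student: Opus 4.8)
The plan is to prove both implications by testing the mixing property on bounded functions, which are dense in $L^p(\mu)$ since $\mu$ is finite, and by exploiting the identity $T_g^k\phi=\phi\circ g^k$. Throughout I will use that, because $g$ is one-to-one, the condition $B_k\cap g^k(B_k)=\emptyset$ is equivalent to $B_k\cap g^{-k}(B_k)=\emptyset$ (take the $g^{-k}$-preimage of the first equality and use injectivity, and conversely note that $y\in B_k\cap g^k(B_k)$ forces a point of $B_k\cap g^{-k}(B_k)$).

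For the sufficiency direction (the sets $\{B_k\}$ force mixing), I would fix nonempty open $U,V$, pick bounded $\phi\in U$ and $\psi\in V$ and a radius $\delta$ with $B(\phi,\delta)\subseteq U$ and $B(\psi,\delta)\subseteq V$. Given a small $\varepsilon$ (to be chosen in terms of $\delta$, $\|\phi\|_\infty$, $\|\psi\|_\infty$), I take the associated $k_0$ and sets $\{B_k\}$, and for each $k\ge k_0$ define the interpolating function
\[ h_k=\phi\,\chi_{B_k}+(\psi\circ g^{-k})\,\chi_{g^k(B_k)}, \]
which is well posed because $B_k$ and $g^k(B_k)$ are disjoint. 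The crucial computation is that for $x\in B_k$ one has $g^k(x)\in g^k(B_k)$, whence $(T_g^k h_k)(x)=h_k(g^k(x))=\psi(g^{-k}(g^k(x)))=\psi(x)$; thus $T_g^k h_k$ equals $\psi$ on $B_k$ while $h_k$ equals $\phi$ on $B_k$. Consequently both $h_k-\phi$ and $T_g^k h_k-\psi$ are supported on $X\setminus B_k$, a set of measure $<\varepsilon$, and since every function in sight is bounded by $\max(\|\phi\|_\infty,\|\psi\|_\infty)$ both $L^p$-norms are $O(\varepsilon^{1/p})$. Choosing $\varepsilon$ small makes $h_k\in U$ and $T_g^k h_k\in V$ for every $k\ge k_0$, which is exactly mixing.

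For the necessity direction I would test mixing against the constant functions, taking $U=B(\mathbf 1,\delta)$ and $V=B(\mathbf 0,\delta)$, both nonempty since $\mathbf 1\in L^p(\mu)$ by finiteness of $\mu$. Mixing yields $k_0$ so that for each $k\ge k_0$ there is $h_k\in U$ with $T_g^k h_k\in V$, i.e. $h_k\approx 1$ and $h_k\circ g^k\approx 0$ in $L^p$. Setting $A_k=\{\,|h_k|\ge 1/2\,\}$ and $B_k=A_k\setminus g^{-k}(A_k)$, a Chebyshev estimate on $\|h_k-\mathbf 1\|_p<\delta$ bounds $\mu(X\setminus A_k)$, and on $\|h_k\circ g^k\|_p<\delta$ bounds $\mu(g^{-k}(A_k))=\mu(\{\,|h_k\circ g^k|\ge 1/2\,\})$; taking $\delta$ small relative to $\varepsilon$ then gives $\mu(X\setminus B_k)<\varepsilon$. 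Disjointness is automatic: $B_k\cap g^{-k}(B_k)\subseteq A_k\cap(g^{-k}(A_k))^c\cap g^{-k}(A_k)=\emptyset$, and hence $B_k\cap g^k(B_k)=\emptyset$.

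The density reduction and the Chebyshev/norm estimates are routine. The genuinely delicate point, which I expect to be the main obstacle, is arranging the explicit interpolant $h_k$ so that its $g^k$-image is forced onto $\psi$ while $h_k$ itself stays near $\phi$, and keeping careful track of which functions are uniformly bounded so that all error terms collapse to the measure of $X\setminus B_k$. Verifying the disjointness condition through its equivalent formulation $B_k\cap g^{-k}(B_k)=\emptyset$ is the other place where the one-to-one hypothesis on $g$ must be used explicitly.
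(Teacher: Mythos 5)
Your proof is correct, but there is nothing in the paper to measure it against: the paper never proves Theorem~\ref{MixingBDP}. It is imported verbatim as a background characterization from \cite{BDP} (``Characterizations of topological transitivity and topological mixing for composition operators were given in \cite{BDP}\dots Below we state them in the specific form we will use''), and the paper only applies it. So what you have produced is a self-contained proof of the specialized finite-measure form, and it is sound. For sufficiency, the interpolant $h_k=\phi\,\chi_{B_k}+(\psi\circ g^{-k})\,\chi_{g^k(B_k)}$ does the job: on $B_k$ it equals $\phi$, and $T_g^kh_k=\psi$ on $B_k$ by the disjointness $B_k\cap g^k(B_k)=\emptyset$, so both error terms are supported on $X\setminus B_k$ and are uniformly bounded by $2\max(\|\phi\|_\infty,\|\psi\|_\infty)$, giving $O(\varepsilon^{1/p})$ norms; injectivity and bimeasurability of $g$ are exactly what make $\psi\circ g^{-k}$ well defined and measurable on $g^k(B_k)$, and finiteness of $\mu$ is what makes bounded functions dense. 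For necessity, testing mixing on $U=B(\mathbf 1,\delta)$, $V=B(\mathbf 0,\delta)$ and applying Chebyshev to $\|h_k-\mathbf 1\|_p<\delta$ and $\|T_g^kh_k\|_p<\delta$ bounds both $\mu(X\setminus A_k)$ and $\mu(g^{-k}(A_k))$ by $2^p\delta^p$, so $B_k=A_k\setminus g^{-k}(A_k)$ works once $2^{p+1}\delta^p<\varepsilon$; the quantifier order is right in both directions (the tolerance is fixed before $k_0$, and one $k_0$ serves all $k\ge k_0$). The only cosmetic point: your equivalence $B\cap g^k(B)=\emptyset\iff B\cap g^{-k}(B)=\emptyset$ is pure set theory (if $x\in B\cap g^{-k}(B)$ then $g^k(x)\in B\cap g^k(B)$, and conversely) and does not actually need injectivity; where injectivity genuinely enters is the step $\psi(g^{-k}(g^k(x)))=\psi(x)$ in the sufficiency half. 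This is also, in substance, the same strategy used in the cited reference, so your proof can stand in for the missing one.
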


\begin{Theorem}\label{TransBDP}  Let $X=(X, {\mathcal S},\mu)$ be a finite measure space and $g:X\to X$ be an one-to-one bimeasurable transformation  satisfying $(*)$. Then, the composition operator 
$T_g: L^p(X) \rightarrow L^p(X)$ is topologically transitive if and only if 
for each $\varepsilon>0$, there exist $k\ge 1$ and a measurable set $B\subseteq X$ with 
\[\mu(X{\setminus} B)< \varepsilon\quad\textrm{and}\quad B\cap g^k(B)=\emptyset.\]
\end{Theorem}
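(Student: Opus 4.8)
The plan is to prove both implications directly from the definition of topological transitivity, using that bounded measurable functions are dense in $L^p(\mu)$ (since $\mu$ is finite) and that $g$ being one-to-one and bimeasurable makes $g^{-k}$ available, so that the relation $T_g^k\phi=\phi\circ g^k$ can be inverted on images. Throughout I write $\chi_B$ for the characteristic function of $B$ and $\chi_X$ for the constant function $1$.

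For sufficiency, suppose the wandering-set condition holds and let $U,V$ be nonempty open sets. I would pick bounded $\phi\in U$ and $\psi\in V$, let $M$ bound $|\phi|$ and $|\psi|$, and fix $\delta>0$ with $B(\phi,\delta)\subseteq U$ and $B(\psi,\delta)\subseteq V$. Applying the hypothesis with a small $\varepsilon$ yields $k\ge1$ and a measurable $B$ with $\mu(X\setminus B)<\varepsilon$ and $B\cap g^k(B)=\emptyset$. The key construction is
\[ \eta \;=\; \phi\,\chi_B \;+\; (\psi\circ g^{-k})\,\chi_{g^k(B)}, \]
which is unambiguous precisely because $B$ and $g^k(B)$ are disjoint and lies in $L^p(\mu)$ since it is bounded by $M$. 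As $\eta=\phi$ on $B$, one gets $\|\eta-\phi\|_p^p\le (2M)^p\mu(X\setminus B)<(2M)^p\varepsilon$. For the image, for $x\in B$ we have $g^k x\in g^k(B)$, hence $\eta(g^k x)=\psi(g^{-k}(g^k x))=\psi(x)$, so $T_g^k\eta=\psi$ on $B$; off $B$ the integrand $|\eta\circ g^k-\psi|$ is bounded pointwise by $2M$ because $|\eta|\le M$ everywhere, giving $\|T_g^k\eta-\psi\|_p^p\le(2M)^p\mu(X\setminus B)<(2M)^p\varepsilon$. Choosing $\varepsilon$ small enough places $\eta\in U$ and $T_g^k\eta\in V$, so $T_g^k(U)\cap V\neq\emptyset$.

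For necessity, assume $T_g$ is transitive and fix $\varepsilon>0$. I would test transitivity on neighborhoods of $\chi_X$ and $0$: applying it to $U=B(\chi_X,\rho)$ and $V=B(0,\rho)$ produces $k\ge1$ and $\eta$ with $\|\eta-\chi_X\|_p<\rho$ and $\|\eta\circ g^k\|_p<\rho$. I then set
\[ B=\bigl\{x:\ |\eta(x)-1|<\tfrac12\ \text{ and }\ |\eta(g^k x)|<\tfrac12\bigr\}. \]
By Chebyshev's inequality $\mu(X\setminus B)\le 2^p\|\eta-\chi_X\|_p^p+2^p\|\eta\circ g^k\|_p^p<2^{p+1}\rho^p$. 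Moreover $B\cap g^k(B)=\emptyset$: if $y\in B$ then $\eta(y)>\tfrac12$, whereas writing $y=g^k(x)$ with $x\in B$ forces $|\eta(y)|=|\eta(g^k x)|<\tfrac12$, a contradiction. Taking $\rho$ with $2^{p+1}\rho^p<\varepsilon$ yields the required set.

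I expect the only real care to be in the density reduction to bounded functions and in the pointwise control of $\eta\circ g^k$ over $X\setminus B$. Condition $(*)$ is needed only to guarantee that $T_g$ is a bounded operator on $L^p(\mu)$; the explicit norm bounds rely on the pointwise estimate $|\eta|\le M$ together with finiteness of $\mu$, rather than on the operator norm of $T_g$.
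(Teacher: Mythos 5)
Your proof is correct, but there is nothing in the paper to compare it against: Theorem~2.4 is stated as a background result imported from \cite{BDP} (the paper explicitly says the characterizations of transitivity and mixing for composition operators ``were given in \cite{BDP}''), and no proof of it appears in the text. Judged on its own, your argument is sound and complete, and it is the natural direct argument for such a characterization. For sufficiency, the interpolating function $\eta=\phi\chi_B+(\psi\circ g^{-k})\chi_{g^k(B)}$ is well defined and measurable precisely because $g$ is injective and bimeasurable (so $g^k(B)$ is measurable and $g^{-k}$ makes sense on it); it is bounded by $M$, agrees with $\phi$ on $B$, and satisfies $T_g^k\eta=\psi$ on $B$, so both estimates $\|\eta-\phi\|_p^p\le(2M)^p\mu(X\setminus B)$ and $\|T_g^k\eta-\psi\|_p^p\le(2M)^p\mu(X\setminus B)$ hold, and choosing $\varepsilon<(\delta/(2M))^p$ finishes; the reduction to bounded $\phi,\psi$ uses density of bounded (e.g.\ simple) functions in $L^p$ of a finite measure, which is standard. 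For necessity, testing transitivity on $B(\chi_X,\rho)$ and $B(0,\rho)$, applying Chebyshev at level $\tfrac12$, and deriving the clash $|\eta(y)|>\tfrac12$ versus $|\eta(y)|<\tfrac12$ for $y\in B\cap g^k(B)$ is exactly the right mechanism, and the bound $\mu(X\setminus B)<2^{p+1}\rho^p$ is correct. Two minor remarks, neither a gap: (i) condition $(*)$ does slightly more than make $T_g$ bounded --- it makes $T_g$ well defined on equivalence classes, since $\mu(N)=0$ implies $\mu(g^{-k}(N))=0$ by $(*)$, and you use this tacitly when you identify $T_g^k\eta$ with the pointwise composition $\eta\circ g^k$; (ii) in the necessity step one should fix pointwise representatives of $\eta$ and of $\eta\circ g^k$ before defining the set $B$, which costs nothing.
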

\subsection{Odometers} Let ${\bf \alpha} = ({\alpha}_{0},  {\alpha}_{1}, \ldots)$  be a sequence of integers with 
${\alpha}_{i} \geq 2$. Define  
\[{\mathcal A}_{\alpha} : = {\prod}_{i=0}^{\infty} {A}_{i},\]
 where ${A}_{i} : = \{0, 1, \ldots, {\alpha}_{i} -1\}$. We consider ${\mathcal A}_{\alpha}$ endowed with the product topology. Hence, the obtained topological space is homeomorphic to the Cantor space.  
 We recall that  the open subsets of ${\mathcal A}_{\alpha}$ are countable unions of disjoint basic cylinders, i.e., sets of the form  
\[[a_{0}, \dots, a_{i}] := \{(x_{0}, \dots, x_{i}, x_{i+1}, \dots) \in {\mathcal A}_{\alpha}: x_{0} = a_{0}, \dots, x_{i} = a_{i}\}.\]
 
 We consider the following addition with ``carry over"  to the right: if $x = (x_{0}, x_{1}, \ldots)$ and $y=(y_{0}, y_{1}, \ldots)$ are elements of ${\mathcal A}_{\alpha}$, then
 \[x+y = z = (z_{0}, z_{1}, \ldots),\]
where $z_{i} = ( x_{i} + y_{i} + {\varepsilon}_{i}) \text{mod}({\alpha}_{i})$ where $\varepsilon_{0}=0$ and for $i \ge 0$
 \[ {\varepsilon}_{i+1} = \left\{ \begin{array}{ll}
0   & \mbox{if } x_{i} +y_{i} + {\varepsilon}_{i} < {\alpha}_{i} \\
1  & \mbox{otherwise.} \\
\end{array}
\right.\] 
$\varepsilon_i$ is called ``carry over'' in the $i^{th}$ position.

We denote by $f_{\alpha}$ the map ``+1'', that is 
$f_{\alpha}(x_{0}, x_{1}, \ldots) =({x}_{0}, x_{1}, \ldots)+(1, 0, 0, \ldots).$
The pair $({\mathcal A}_{\alpha}, f_{\alpha})$ is a dynamical system known in various contexts as a
solenoid,  adding machine or odometer \cite{BK} and \cite{BK2}. We refer to $f_{\alpha}$ as an {\it odometer}. We point out that, as $f_\alpha$ preserves distance,  it is one to one. Morever, the orbit of $(0,0,\ldots)$ under $f_{\alpha}$ is dense in ${\mathcal A}_{\alpha}$. Hence, $ f_{\alpha}$ is  a bijection. 

If  $k\geq 0$ is an integer,  we identify it with  its  representation $(k_{0}, k_{1}, \dots)$ in terms of $\alpha_i$'s.  More specifically, 
$k = k_0+ \sum_{i=1}^{\infty} k_i \beta _i$ where
$\beta _i = \prod _{j=0}^{i-1} \alpha _j$. This representation is unique. We use $k$ and its representation interchangeably, without explicitly stating so. Moreover, if $x=(x_{0}, x_{1}, \dots) \in {\mathcal A}_{\alpha}$, then   by definition $f^{k}(x)= (x_{0}, x_{1}, \dots) + \underbrace{(1, 0, \dots) + \cdots + (1, 0, \dots)\,}_\text{k times}$. Therefore, $f^{k}(x)= (x_{0}, x_{1}, \dots) + (k_{0}, k_{1}, \dots)$, where the addition is made with the ``carry over'' to the right as described earlier. 

\subsection{Measures on Odometers} \label{subsec}  Let $({\mathcal A}_{\alpha}, f_{\alpha})$ be an odometer.  By ${\mathcal B}({\mathcal A}_{\alpha})$ we denote the $\sigma$-algebra of all Borel subsets of ${\mathcal A}_{\alpha}$. For each $i$, let ${\mu}_{i}$  be a probability measure on $A_{i}$ with ${\mu}_{i}(a) >0$ for all $a \in A_{i}$. Consider  the infinite product probability space 
$({\mathcal A}_{\alpha}, {\mathcal B}({\mathcal A}_{\alpha}), \mu): = \otimes_{i=0}^{\infty} (A_{i}, {\mu}_{i})$. 
We will study $T_{f_{\alpha}}: L^p(\mu) \rightarrow L^p(\mu)$. In particular, we will give necessary and sufficient conditions on $\{\mu_i\}$ which guarantee topological transitivity and topological mixing of $T_{f_{\alpha}}$. We would like to point out that $f_{\alpha}$, in general, does not preserve $\mu$.

Measures of these types on odometers are well-studied. For example, see the survey article \cite{DASI} on ergodic theory of non-singular transformation. 
If ${\prod}_{i=0}^{\infty} \max \{{\mu}_{i}(a): a \in A_{i}\}=0$, then $\mu$ turns out to be non-atomic, $f_{\alpha}$ is non-singular with respect to $\mu$, and $f_{\alpha}$ is ergodic.
In such situations, $f_{\alpha}$ is called the {\it non-singular odometer} associated with ${({\alpha}_{i}, {\mu}_{i})}_{i=0}^{\infty}$. 

The importance of non-singular odometers is also due to the fact that each non-singular odometer is a Markov odometer \cite{DASI}, and, as  it is well-known,      
every ergodic non-singular transformation is orbit-equivalent to a Markov odometer \cite{DH}.


\section{Main Results}
Throughout the paper, we work with only one odometer $\alpha = ({\alpha}_{0},  {\alpha}_{1}, \ldots)$ at a time and hence we call the map $f$ instead of $f_{\alpha}$ and use ${\mathcal A}$ instead of ${\mathcal A_{\alpha}}$.
For each $i$, we fix ${\mu}_{i}$ a probability measure on $A_{i}$ with support $A_i$ and we call $\mu$ the product of $\mu_i$'s. To ensure that $T_{f}$ is continuous, we must 
guarantee that $f$ satisfies Condition $(*)$. To this aim,  for $i \in \N $ and $j \in A_i$, define
 \[{\lambda}_{i}(j) := \frac{{\mu}_{i}(j)}{{\mu}_{i}(j-1)},\]
meaning that $j-1 = {\alpha}_{i} -1$ if $j=0$. Then, $\{\mu_i\}$ satisfies Condition $(*)$ if  and only if the following holds (see \cite{DASI}):
\[\tag{$\diamondsuit$}  0 < \inf  \left \{{\lambda}_{l}(j) {\prod}_{i=0}^{l-1} {\lambda}_{i}(0) : l \geq 1, j \in  A_l \right \}.\]
It is clear that, for any given sequence of $\{\alpha_i\}$, if we choose $\mu_i$ to be uniform distribution on $A_i$, then $\{\mu_i\}$ satisfies Condition ($\diamondsuit$). In general, it may not be obvious that $\{\mu_i\}$ satisfies Condition ($\diamondsuit$) and it needs to be verified.

In order to state the main results of the paper, we introduce some notation. 
For $i \in \n$, let 
\begin{align*}
{\eta}_{i} & : =   \max \{{\mu}_{i}(j): j \in A_i\},\\
{\delta}_{i}& : =  \min \{{\mu}_{i}(j): j \in A_i\}.
\end{align*}
For $0 \leq i \leq j$,  we denote by $p_{i,j}: {\mathcal A} \rightarrow {\prod}_{s=i}^j A_{s}$ the ``natural'' projection, 
that is $p_{i,j}((x_{0}, x_{1}, \dots, x_{i}, ..., x_{j}, x_{j+1}, \dots)) = (x_{i}, ..., x_{j})$, and by ${\mu}_{{i, j}}$ the product measure  
$\prod_{s=i}^j {\mu}_{s}$ on ${\mathcal A}_{i,j} : = {\prod}_{s=i}^j A_{s}$. We define 
\[\psi_{i,j}:=  \max  \left \{\mu_{{i, j}}(A): A \subseteq {\mathcal A}_{i,j} \text{ such that } \  \exists  k \in  {\mathcal A}_{i,j} \text{ with } (A+k) \cap A = \emptyset \right \},\]
with $A+k:= \{(a+k) \bmod {\alpha}_{i} : a \in A\}$, where the addition is the ``carry over'' to the right defined before. (If we have a carry over at the last step, we ignore it.) Moreover, if we write, for any integer $i$, $\psi_{i}$ instead of ${\psi}_{i,i}$, we clearly have that
\[\psi_i:=  \max \{\mu_i(A): A \subseteq A_i \text{ such that } \exists \  0 < k <   \alpha_i  \text{ with } (A+k) \cap A = \emptyset\},\]
where $A+k:= \{ (a+k) \bmod {\alpha}_{i} : a \in A\}$.  It is immediate that $0 <  {\eta}_{i} \leq \psi_i.$ Finally,
we define
\[\rho _n := \prod_{i=0}^n \left (\eta_i/\delta_i \right ).\]

Next, we state the main results of the paper. We always assume that $\mu$ satisfies Condition ($\diamondsuit$), unless otherwise stated. This is equivalent to saying that $T_f$ is well-defined and continuous.  

In the mixing case, we have an explicit characterization when the sequence $\{\alpha_i\}$ is bounded and $T_f$ is continuous.

\begin{Theorem} \label{MixingMainBounded} {\em (Mixing  Characterization)} Suppose $\{\alpha_i\}$ is bounded and $\{\mu_i\}$ is such that  $T_f$ is continuous. Then, the composition operator  $T_{f}$ is topologically mixing if and only if $\lim_{i}{\eta}_{i}  = 1.$
\end{Theorem}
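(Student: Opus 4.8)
The plan is to work throughout with the mixing characterization of Theorem~\ref{MixingBDP}. It is convenient to set
\[
M(k):=\sup\{\mu(B): B\subseteq\mathcal A\ \text{measurable},\ B\cap f^{k}(B)=\emptyset\},
\]
so that, by Theorem~\ref{MixingBDP}, $T_f$ is topologically mixing if and only if $M(k)\to 1$ as $k\to\infty$. Write $M$ for a bound on $\{\alpha_i\}$ and, for each $i$, let $m_i\in A_i$ satisfy $\mu_i(m_i)=\eta_i$. Both implications will be proved by producing, for a given $k$, nearly optimal competitors $B$ for $M(k)$ that are cylinders determined by a few coordinates near the top nonzero digit of $k$; the two directions differ only in whether one exploits $\eta_i\to 1$ or $\eta_i\not\to 1$. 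I expect the reverse implication to be the delicate one.

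For the sufficiency ($\eta_i\to1\Rightarrow$ mixing) I would fix $k$ and let $j=j(k)$ be its highest nonzero digit. Boundedness of $\{\alpha_i\}$ forces $j(k)\to\infty$ as $k\to\infty$, since $k<\beta_{j+1}\le M^{\,j+1}$. As $k_{j}\neq0$ while $k_{s}=0$ for $s>j$, a short carry analysis shows that
\[
B_k:=\begin{cases}\{x:x_j=m_j\}, & 1\le k_j\le\alpha_j-2,\\[2pt] \{x:x_j=m_j,\ x_{j+1}=m_{j+1}\}, & k_j=\alpha_j-1,\end{cases}
\]
satisfies $B_k\cap f^{k}(B_k)=\emptyset$: in the first case $(x+k)_j=(m_j+k_j+\varepsilon_j)\bmod\alpha_j\neq m_j$ for either carry-in $\varepsilon_j\in\{0,1\}$, while in the second case a carry into position $j$ returns coordinate $j$ to $m_j$ but then necessarily advances coordinate $j+1$ away from $m_{j+1}$. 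Since $\mu(B_k)\ge\eta_{j}\eta_{j+1}$ and $\eta_i\to1$, given $\varepsilon>0$ we obtain $\mu(\mathcal A\setminus B_k)<\varepsilon$ for all large $k$; hence $M(k)\to1$ and $T_f$ is mixing.

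For the necessity I would argue the contrapositive: if $\eta_i\not\to1$ there are $\delta>0$ and infinitely many $i$ with $\eta_i\le1-\delta$. The key reduction is to take $k$ of the special form $k=k_i\beta_i$, with a single nonzero digit $k_i\in\{1,\dots,\alpha_i-1\}$ in position $i$: then $f^{k}$ fixes all coordinates below $i$ and acts on the tail $V=\prod_{s\ge i}A_s$ as $\sigma^{k_i}$, where $\sigma=f^{\beta_i}$ is the odometer on $V$. Conditioning on the frozen lower coordinates gives $M(k)=\sup\{\mu_V(C):C\subseteq V,\ C\cap\sigma^{k_i}(C)=\emptyset\}$. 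Now choose $k_i$ adapted to $\mu_i$: let $p<q$ in $A_i$ carry the two largest masses of $\mu_i$, with $\mu_i(p)\ge\mu_i(q)$, and put $k_i:=q-p$, so that the step from coordinate value $p$ to $q$ produces \emph{no} carry. Writing $C_a=\{w\in W:(a,w)\in C\}$ for $a\in A_i$, where $W=\prod_{s>i}A_s$, the disjointness $C\cap\sigma^{k_i}(C)=\emptyset$ forces the single clean relation $C_p\cap C_q=\emptyset$, whence $\mu_W(C_p)+\mu_W(C_q)\le1$ and
\[
\mu_V(C)\le\bigl(1-\mu_i(p)-\mu_i(q)\bigr)+\max\{\mu_i(p),\mu_i(q)\}=1-\min\{\mu_i(p),\mu_i(q)\}.
\]
Because $\eta_i\le1-\delta$ and $\alpha_i\le M$, the off-top mass $1-\eta_i\ge\delta$ is spread over at most $M-1$ points, so the second largest mass obeys $\min\{\mu_i(p),\mu_i(q)\}\ge\delta/(M-1)=:\varepsilon_0>0$, giving $M(k)\le1-\varepsilon_0$. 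As these $k=k_i\beta_i$ tend to infinity along infinitely many $i$, we get $M(k)\not\to1$, so $T_f$ is not mixing.

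The main obstacle is precisely this reverse implication, and specifically the treatment of the ``carry over'' when one shifts near a spread-out coordinate: a naive choice such as $k=\beta_i$ can fail, since the cyclic shift by $1$ may admit a large shift-free set (for instance when the mass of $\mu_i$ sits on an arithmetic-progression pattern), so that $M(\beta_i)$ stays close to $1$. The device that removes this difficulty is to select $k_i=q-p$ so that the two heaviest fibers are carried onto one another \emph{without} a carry, reducing everything to one honest disjointness $C_p\cap C_q=\emptyset$; this is where boundedness of $\{\alpha_i\}$ is essential, as it makes the lower bound $\varepsilon_0=\delta/(M-1)$ on the discarded mass uniform in $i$. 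Boundedness is likewise what guarantees $j(k)\to\infty$ in the forward direction. The remaining verifications—the carry bookkeeping in the two cylinder cases and the elementary linear optimization yielding the displayed bound—are routine.
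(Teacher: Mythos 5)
Your proposal is correct and takes essentially the same approach as the paper. The sufficiency part coincides with the paper's Lemma \ref{MixingForward} (cylinders pinned at the highest nonzero digit of $k$, with the same carry analysis), and your necessity part --- shifting by the difference $q-p$ of the two heaviest digits so that no carry occurs, which forces the fibers $C_p$ and $C_q$ to be disjoint, with boundedness of $\{\alpha_i\}$ giving a uniform lower bound on the second-largest mass --- is exactly the content of the paper's Lemmas \ref{2ptsNotMixing}--\ref{LemMixreverse}, merely phrased contrapositively: your bound $\mu_V(C)\le 1-\min\{\mu_i(p),\mu_i(q)\}$ replaces the paper's $\tfrac{15}{16}$-overlap argument for the projections $C_a\cap C_b\neq\emptyset$.
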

   
One may wonder if there exists any such $\{\alpha_i\}$  and $\{\mu_i\}$ as in the statement of Theorem~\ref{MixingMainBounded}. The following theorem shows that such is the case if $\{\alpha_i\}$ does not grow too fast. 
\begin{Theorem} {\em (Mixing Existence)} \label{MixingExistence}
Let $\{\alpha_i \}$ be such that
\[ \sup_n \frac{\alpha_{n+1}}{ \prod_{i=0}^{n} \alpha_i } < \infty. \]
Then, there exists $\{\mu_i\}$ satisfying ($\diamondsuit$) such that $T_f$ is topologically mixing. In particular, such is the case when $\{\alpha_i\}$ is bounded. 
\end{Theorem}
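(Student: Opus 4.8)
The plan is to take measures $\{\mu_i\}$ concentrated at $0$, so that $\eta_i=\mu_i(0)\to 1$, and then to verify topological mixing directly from Theorem~\ref{MixingBDP}: for each large $k$ I will produce a cylinder $B_k$ of measure close to $1$ with $B_k\cap f^k(B_k)=\emptyset$. The disjointness will come from the fact that $f^k(x)=x+k$ changes one high coordinate of almost every $x$; the only nuisance is the carry over, which I defuse by allowing one extra coordinate.

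For the construction, fix $r_i\in(0,1/2]$ with $r_i\to 0$ and set $\mu_i(j)=c_i r_i^{\,j}$ for $j\in A_i$, with $c_i=(1-r_i)/(1-r_i^{\alpha_i})$. Each $\mu_i$ has full support, $\eta_i=\mu_i(0)=c_i$, and $1-\eta_i\le 2r_i\to0$, so $\eta_i\to 1$. For this geometric weight $\lambda_i(j)=r_i$ for $1\le j\le\alpha_i-1$ and $\lambda_i(0)=r_i^{-(\alpha_i-1)}\ge 1$, so the quantity in $(\diamondsuit)$ reduces to $\inf_{l\ge1} r_l\prod_{i=0}^{l-1}r_i^{-(\alpha_i-1)}$. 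Writing $r_i=e^{-s_i}$ this equals $\inf_l\exp\!\big(-s_l+\sum_{i<l}(\alpha_i-1)s_i\big)$, which is positive once $s_l\le C+\sum_{i<l}(\alpha_i-1)s_i$ for all $l$. Since $\alpha_i\ge2$, the choice $s_i=2^i$ (that is $r_i=e^{-2^i}$) makes $r_{l+1}r_l^{-\alpha_l}=e^{2^l(\alpha_l-2)}\ge1$, so the sequence is nondecreasing in $l$ and its infimum $r_1r_0^{-(\alpha_0-1)}=e^{\alpha_0-3}>0$; hence $(\diamondsuit)$ holds and $T_f$ is continuous.

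To establish mixing, fix $\eps>0$; I must find $k_0$ and cylinders $B_k$ ($k\ge k_0$) with $\mu(\mathcal A\setminus B_k)<\eps$ and $B_k\cap f^k(B_k)=\emptyset$. Given $k$, let $m=m(k)$ be the largest index with $k_m\neq0$; then $m(k)\to\infty$ as $k\to\infty$, and adding $k$ only alters the coordinates $0,\dots,m$ together with the carry they send upward. If $k_m\le\alpha_m-2$, then for every $x$ with $x_m=0$ the $m$-th coordinate of $f^k(x)$ is $(k_m+\varepsilon_m)\bmod\alpha_m\in\{1,\dots,\alpha_m-1\}$, whatever the incoming carry $\varepsilon_m\in\{0,1\}$ is; thus $B_k:=\{x:x_m=0\}$ works, with $\mu(\mathcal A\setminus B_k)=1-\eta_m$. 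In the remaining case $k_m=\alpha_m-1$ the carry may wrap coordinate $m$ back to $0$, so I take $B_k:=\{x:x_m=0,\ x_{m+1}=0\}$: for $x\in B_k$, if no carry enters position $m$ then coordinate $m$ becomes $\alpha_m-1\neq0$, while if a carry enters position $m$ it is absorbed there and forced out into position $m+1$, turning coordinate $m+1$ from $0$ into $1\neq0$ (using $\alpha_{m+1}\ge2$). Either way $f^k(x)\notin B_k$, so $\mu(\mathcal A\setminus B_k)\le(1-\eta_m)+(1-\eta_{m+1})$. In both cases the complementary measure is at most $(1-\eta_m)+(1-\eta_{m+1})\to0$; choosing $k_0$ so that this bound is $<\eps$ for $k\ge k_0$ and invoking Theorem~\ref{MixingBDP} finishes the proof.

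The crux is the carry over in the case $k_m=\alpha_m-1$: one would like coordinate $m$ alone to certify disjointness, but the carry can leave it unchanged. The saving observation is that whenever the carry fixes coordinate $m$ it must propagate into coordinate $m+1$, so two adjacent coordinates always suffice; this is the only genuinely delicate point. I would remark that this argument never uses the hypothesis $\sup_n\alpha_{n+1}/\prod_{i=0}^n\alpha_i<\infty$, since the construction yields $(\diamondsuit)$ together with $\eta_i\to1$, hence mixing, for every admissible $\{\alpha_i\}$; the growth condition seems to function here only as a convenient sufficient hypothesis (covering in particular the bounded case). I would nonetheless double-check this against the authors' intended measure, in case their construction is the actual source of the stated hypothesis.
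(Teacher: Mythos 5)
Your proof is correct, but it rests on a genuinely different construction from the paper's, and it actually establishes a stronger statement. The paper takes $\mu_n(0)=1-1/m_n$ with $m_n=2^{n+1}+1$ and distributes the remaining mass \emph{uniformly} over $A_n\setminus\{0\}$; with that choice the off-atom ratios $\lambda_l(j)$ are of size $1/(m_l(\alpha_l-1))$, so the verification of Condition~($\diamondsuit$) produces the factor $\prod_{i=0}^{l-1}\alpha_i/\alpha_l$, and this is precisely where the hypothesis $\sup_n \alpha_{n+1}/\prod_{i=0}^{n}\alpha_i<\infty$ enters; mixing is then obtained by quoting Lemma~\ref{MixingForward}, since $\eta_n=1-2^{-(n+1)}\to 1$. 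Your geometric measure $\mu_i(j)=c_i r_i^{\,j}$ instead makes every downward ratio equal to $r_i$ while the wrap-around ratio is $\lambda_i(0)=r_i^{-(\alpha_i-1)}$, so the compensation in ($\diamondsuit$) scales automatically with $\alpha_i$: with $r_i=e^{-2^i}$ the quantities $r_l\prod_{i<l}r_i^{-(\alpha_i-1)}$ are nondecreasing in $l$ and bounded below by $e^{-1}$ using only $\alpha_i\ge 2$. Hence your closing observation is right: your construction yields ($\diamondsuit$) and $\eta_i\to 1$ (so mixing) for \emph{every} sequence $\{\alpha_i\}$, and the growth hypothesis in Theorem~\ref{MixingExistence} is an artifact of the paper's particular near-uniform measure rather than a genuine necessity; this is consistent with the paper, which claims only sufficiency (and Example~\ref{MixingExample} already shows mixing is possible with unbounded $\{\alpha_i\}$). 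Finally, your direct verification of mixing is essentially a re-derivation of Lemma~\ref{MixingForward} --- the paper's lemma fixes the two heaviest coordinates $a_j,a_{j+1}$ and handles the carry by the same dichotomy you use; your only refinement is noticing that one coordinate suffices when $k_m\le\alpha_m-2$ --- so you could have shortened your argument by citing that lemma, which precedes this theorem in the paper.
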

One may also wonder if Theorem~\ref{MixingMainBounded} holds if $\{\alpha_i\}$ is unbounded. The following example, with an intricate construction, shows that Theorem~\ref{MixingMainBounded} can fail spectacularly if $\{\alpha_i\}$ is unbounded.
\begin{Example}\label{MixingExample}
There exist $\{\alpha_i\}$ and $\{\mu_i\}$ such that $T_f$ is mixing and $\lim_i \eta_i =0$. 
\end{Example}
In the transitive case, we have the following characterization and its corollary which yields a simple sufficient condition. 
\begin{Theorem} \label{TRANSCHAR}  {\em (Transitivity Characterization)} 
Suppose $\{\alpha_i\}$ and $\{\mu_i\}$ are such that  $T_f$ is continuous. Then, the composition operator  $T_{f}$ is topologically transitive if and only if $\limsup_{i\leq j} \{{\psi}_{i,j}\} =1$. 
\end{Theorem}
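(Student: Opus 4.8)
The plan is to derive both implications from the transitivity criterion of Theorem~\ref{TransBDP} (with $g=f$), exploiting the fact that for each $j$ the projection $p_{0,j}\colon \mathcal A\to \mathcal A_{0,j}$ is a group homomorphism onto the cyclic group $(\mathcal A_{0,j},+)\cong \Z/\beta_{j+1}\Z$, so that $p_{0,j}(f^{\pm K}(x))=p_{0,j}(x)\pm K$ for every integer $K$. I would prove the equivalent reformulation that $T_f$ is transitive if and only if for every $\varepsilon>0$ there is a window $[i,j]$ with $\psi_{i,j}>1-\varepsilon$. (Since an extremal pair $(A,k)$ for $[i,j]$ can be padded with full lower coordinates and a shift supported on $[i,j]$, one has $\psi_{i,j}\le \psi_{0,j}$ with $\psi_{0,j}$ nondecreasing in $j$; hence this reformulation is the same as $\lim_j \psi_{0,j}=1$, i.e. the supremum of the $\psi_{i,j}$ equals $1$.)

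\emph{Sufficiency.} Suppose such windows exist and fix $\varepsilon>0$. Choose $[i,j]$, a set $A\subseteq \mathcal A_{i,j}$ with $\mu_{i,j}(A)>1-\varepsilon$, and $k$ with $(A+k)\cap A=\emptyset$; note $A\neq\emptyset$ forces $k\neq 0$. Put $B:=p_{i,j}^{-1}(A)$, so $\mu(\mathcal A\setminus B)=1-\mu_{i,j}(A)<\varepsilon$, and let $K\ge 1$ be the integer whose $\alpha$-adic digits equal $k$ on positions $i,\dots,j$ and vanish elsewhere. Because $K$ has no digit below position $i$, adding $K$ creates no carry into position $i$, so the coordinates $i,\dots,j$ of $f^K(x)=x+K$ are exactly $p_{i,j}(x)+k$. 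Hence $f^K(B)=p_{i,j}^{-1}(A+k)$ and $B\cap f^K(B)=p_{i,j}^{-1}\bigl(A\cap(A+k)\bigr)=\emptyset$, so $T_f$ is transitive by Theorem~\ref{TransBDP}.

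\emph{Necessity.} Assume $T_f$ is transitive and fix $\varepsilon>0$. By Theorem~\ref{TransBDP} there are $K\ge 1$ and a measurable $B$ with $\mu(\mathcal A\setminus B)<\varepsilon/3$ and $B\cap f^K(B)=\emptyset$; applying the bijection $f^{-K}$ gives also $B\cap f^{-K}(B)=\emptyset$. As the cylinder algebra is dense, pick $j$ with $K<\beta_{j+1}$ and $A'\subseteq \mathcal A_{0,j}$ so that $B':=p_{0,j}^{-1}(A')$ satisfies $\mu(B\triangle B')<\delta$, with $\delta$ fixed below. Since $p_{0,j}$ is a homomorphism, $f^{-K}(B')=p_{0,j}^{-1}(A'-K)$, hence $\mu_{0,j}\bigl(A'\cap(A'-K)\bigr)=\mu\bigl(B'\cap f^{-K}(B')\bigr)$. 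Using $B\cap f^{-K}(B)=\emptyset$ one obtains the inclusion $B'\cap f^{-K}(B')\subseteq (B'\setminus B)\cup f^{-K}(B'\setminus B)$, and the contraction bound $\mu(f^{-K}(E))\le \mu(E)/c^K$ coming from Condition $(*)$ gives $\mu_{0,j}\bigl(A'\cap(A'-K)\bigr)\le \delta(1+c^{-K})$. Finally set $A:=A'\setminus(A'-K)$; then $A\cap(A-K)=\emptyset$ (equivalently $A\cap(A+K)=\emptyset$), and $\mu_{0,j}(A)\ge \mu_{0,j}(A')-\delta(1+c^{-K})\ge 1-\varepsilon/3-\delta-\delta(1+c^{-K})$. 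Choosing $\delta$ small after $K$ is fixed makes this exceed $1-\varepsilon$, so $\psi_{0,j}>1-\varepsilon$, as required.

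The only genuine obstacle is controlling the measure distortion of $f^{\pm K}$ on the error set $B\triangle B'$, since $f$ does not preserve $\mu$. The resolution is to work with $f^{-K}$ rather than $f^{K}$: Condition $(*)$ yields $\mu(f^{-1}(E))\le \mu(E)/c$, hence $\mu(f^{-K}(E))\le \mu(E)/c^{K}$, so a small set stays small after $f^{-K}$; passing to the inverse shift is legitimate because $f$ is a bijection and both disjointness relations hold. The remaining ingredients—the cylinder approximation, the homomorphism identity $p_{0,j}\circ f^{\pm K}=(\,\cdot\pm K)\circ p_{0,j}$, and the deletion $A=A'\setminus(A'-K)$ that upgrades near-disjointness to exact disjointness—are routine.
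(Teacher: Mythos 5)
Your proof is correct, but the necessity direction takes a genuinely different route from the paper. The sufficiency argument is essentially the paper's own: lift an extremal window set $A$ to the cylinder $B=p_{i,j}^{-1}(A)$, observe that a shift $K$ with no digits below position $i$ produces no carry into the window, and invoke Theorem~\ref{TransBDP}. For necessity, however, the paper takes the set $B$ given by Theorem~\ref{TransBDP}, uses tightness to replace it by a \emph{compact} set, and then works with the projections $B_i=p_{0,i}(B)$, which automatically satisfy $\mu_{0,i}(B_i)\ge\mu(B)$ with no approximation error; a contradiction/compactness argument (extracting a convergent subsequence of extensions and using that $B$ is closed) shows that some projection must be disjoint from its translate. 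You instead approximate $B$ in measure by a cylinder set $B'=p_{0,j}^{-1}(A')$, transfer the disjointness $B\cap f^K(B)=\emptyset$ to an approximate disjointness of $A'$ and $A'-K$ via the homomorphism identity $p_{0,j}\circ f^{-K}=(\,\cdot\,-K)\circ p_{0,j}$ and the contraction bound $\mu(f^{-K}(E))\le \mu(E)/c^{K}$ from Condition $(*)$, and finally trim to $A=A'\setminus(A'-K)$ to restore exact disjointness. Both are valid; the trade-off is that the paper's projection trick is softer (it needs only regularity of the measure and continuity of $f$, with no quantitative control and no bookkeeping of error sets), while yours is quantitative and constructive in flavor but leans on the constant $c$ from Condition $(*)$ to control distortion under $f^{-K}$ and requires the correct order of quantifiers ($\delta$ chosen after $K$), which you do get right. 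A further small merit of your write-up is that you make explicit the monotonicity $\psi_{i,j}\le\psi_{0,j}\le\psi_{0,j+1}$ that reconciles the $\limsup$ formulation with the windowed statement, a point the paper leaves implicit.
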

\begin{Corollary} \label{CorTRANSSUF} {\em (Transitivity Sufficient  Condition)}
Suppose $\{\alpha_i\}$ and $\{\mu_i\}$ are such that  $T_f$ is continuous. 
If $\limsup_{i} {\eta}_{i} =1$, then $T_{f}$ is topologically transitive.
\end{Corollary}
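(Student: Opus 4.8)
The plan is to read this off directly from the transitivity characterization of Theorem~\ref{TRANSCHAR}. Since the hypothesis already guarantees that $T_f$ is continuous, that characterization applies, and it reduces the claim to a single inequality: it suffices to show that $\limsup_i \eta_i = 1$ forces $\limsup_{i \le j}\psi_{i,j} = 1$. One half of this is free: each $\mu_{i,j}$ is a probability measure, so every $\psi_{i,j}$ lies in $(0,1]$, whence $\limsup_{i\le j}\psi_{i,j} \le 1$. The entire content is therefore to obtain the reverse inequality.

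For the reverse inequality I would use only the diagonal terms $\psi_{i,i} = \psi_i$ together with the pointwise bound $\eta_i \le \psi_i$ recorded right after the definition of $\psi_i$ in the excerpt (which itself comes from testing the defining supremum on a singleton $\{a\}$ realizing $\mu_i(a) = \eta_i$, legitimate because $\alpha_i \ge 2$). The hypothesis $\limsup_i \eta_i = 1$ supplies a subsequence $i_n \to \infty$ with $\eta_{i_n} \to 1$. Along this subsequence the diagonal entries are squeezed,
\[
\eta_{i_n} \le \psi_{i_n} = \psi_{i_n, i_n} \le 1,
\]
so $\psi_{i_n,i_n} \to 1$. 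As $\{(i_n,i_n)\}$ is an admissible family of index pairs with $i_n \le i_n$ and indices tending to infinity, this exhibits values of the doubly-indexed family $\psi_{i,j}$ arbitrarily close to $1$, giving $\limsup_{i\le j}\psi_{i,j} \ge 1$.

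Combining the two bounds yields $\limsup_{i\le j}\psi_{i,j} = 1$, and Theorem~\ref{TRANSCHAR} then delivers topological transitivity of $T_f$. There is no substantial obstacle in this argument, as it merely chains the normalization $\psi_{i,j}\le 1$, the elementary estimate $\eta_i \le \psi_i$, and the restriction of the limit superior to the diagonal. The only point deserving a line of justification is that the diagonal pairs $(i,i)$ genuinely contribute to $\limsup_{i\le j}\psi_{i,j}$, so that the passage to the diagonal subsequence is legitimate; once that is granted, the corollary is immediate.
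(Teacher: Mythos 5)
Your proof is correct and is essentially the paper's own argument: the paper likewise deduces the corollary from the inequality $\eta_i \le \psi_i$ combined with Theorem~\ref{TRANSCHAR}, only stating it in one line where you spell out the diagonal-subsequence details. No gaps; your elaboration of why $\eta_i \le \psi_i$ (testing the supremum on a singleton, using $\alpha_i \ge 2$) and why the diagonal terms force $\limsup_{i\le j}\psi_{i,j}=1$ is exactly what the paper leaves implicit.
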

An example given in \cite{BDP}, Section 3.3, shows that in the setting of odometers there are topologically transitive  operators which are not topologically mixing.  The following theorem shows that this can be done in a very general setting. 
\begin{Theorem}\label{TransExistence1} {\em (Transitive Existence 1)} Suppose $\{\alpha_n\}$ is such that $\liminf_n \alpha_n < \infty$. Then, there exists $\{\mu_i\}$ such that $T_f$ is continuous,  topologically transitive and not mixing. 
\end{Theorem}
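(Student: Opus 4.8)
The plan is to prescribe $\{\mu_i\}$ coordinatewise, splitting the coordinates into groups so that one group defeats mixing while another forces transitivity, all the while keeping Condition~($\diamondsuit$) in force. Since $\liminf_n\alpha_n<\infty$, I fix a value $\alpha\ge2$ with $\alpha_n=\alpha$ for infinitely many $n$ and split this infinite set of coordinates into two infinite subsets $S_{\mathrm{mix}}$ and $S_{\mathrm{tr}}$. On $S_{\mathrm{mix}}$ I put the uniform measure; on $S_{\mathrm{tr}}$ I put measures concentrated near the symbol $0$; on every remaining coordinate I put the uniform measure. Transitivity will come from the concentrated coordinates via Corollary~\ref{CorTRANSSUF}, failure of mixing from the uniform coordinates in $S_{\mathrm{mix}}$, and continuity from a careful choice of the concentration rate.

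For the failure of mixing I would examine $T_f$ at the times $k=\beta_N$ with $N\in S_{\mathrm{mix}}$. Because $\beta_N=\prod_{i<N}\alpha_i$ has the representation with a single $1$ in position $N$, the map $f^{\beta_N}$ leaves the first $N$ coordinates fixed and acts on the tail $\prod_{s\ge N}A_s$ as the $+1$ odometer $g_N$ there. Fibering over the first $N$ coordinates identifies $\sup\{\mu(B):B\cap f^{\beta_N}(B)=\emptyset\}$ with $\Phi_N:=\sup\{\nu(C):C\cap g_N(C)=\emptyset\}$, where $\nu=\bigotimes_{s\ge N}\mu_s$. Writing $C=\bigsqcup_{a\in A_N}\{a\}\times D_a$, the disjointness of $C$ and $g_N(C)$ forces $D_a\cap D_{a+1}=\emptyset$ for $0\le a\le\alpha-2$, hence $\nu(D_a)+\nu(D_{a+1})\le1$; pairing these path constraints gives $\sum_a\nu(D_a)\le\lceil\alpha/2\rceil$. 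Since $\mu_N$ is uniform this yields
\[
\sup\{\mu(B):B\cap f^{\beta_N}(B)=\emptyset\}\ =\ \Phi_N\ \le\ \frac{\lceil\alpha/2\rceil}{\alpha}\ \le\ \frac23 .
\]
As $\beta_N\to\infty$ along $N\in S_{\mathrm{mix}}$, no threshold $k_0$ can produce, with $\varepsilon=\tfrac13$, the sets required by Theorem~\ref{MixingBDP} for all $k\ge k_0$, so $T_f$ is not mixing. Note this step uses nothing about the other coordinates and needs only that $\alpha$ be finite.

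For transitivity I would force $\limsup_i\eta_i=1$ by making the coordinates of $S_{\mathrm{tr}}$ increasingly concentrated at $0$, say $\mu_l(0)=p_l\to1$ with the remaining mass spread uniformly over $\{1,\dots,\alpha-1\}$, so that Corollary~\ref{CorTRANSSUF} applies. The whole difficulty is to do this compatibly with ($\diamondsuit$), which I regard as the main obstacle. Setting $V_l:=\prod_{i<l}\lambda_i(0)=\prod_{i<l}\mu_i(0)/\mu_i(\alpha_i-1)$, Condition~($\diamondsuit$) reads $\inf_{l,j}\lambda_l(j)V_l>0$; for a coordinate concentrated at $0$ the uniform steps contribute $\lambda_l(j)V_l=V_l$, while the binding downward step $0\to1$ equals $V_l^2/V_{l+1}$. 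Thus ($\diamondsuit$) permits the potential to grow only as fast as $V_{l+1}\le V_l^2/c$, but this still allows $V_l\to\infty$. Since $1-p_l=p_l(\alpha-1)V_l/V_{l+1}$, choosing the maximal admissible growth $V_{l+1}=V_l^2/c$ gives $1-p_l\le c(\alpha-1)/V_l$; as the coordinates of $S_{\mathrm{tr}}$ carry the \emph{bounded} base $\alpha$, driving $V_l\to\infty$ forces $p_l\to1$, which is precisely where $\liminf_n\alpha_n<\infty$ enters. Concretely I would fix a small $c\in(0,1]$, let the uniform coordinates keep $V$ flat (they contribute the factor $1$ and the harmless constraints $V\ge c$), and let each successive coordinate of $S_{\mathrm{tr}}$ push $V$ up by the maximal allowed factor; a routine check then gives $\inf_{l,j}\lambda_l(j)V_l\ge c$, so ($\diamondsuit$) holds (hence $T_f$ is continuous) while $p_l\to1$ delivers $\limsup_i\eta_i=1$.

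The delicate point is exactly this reconciliation of ($\diamondsuit$) with $\eta_i\to1$: one must verify that the doubly-exponential room left by $V_{l+1}\le V_l^2/c$ genuinely suffices to send $p_l\to1$ for the given bounded base $\alpha$, and that interleaving the uniform coordinates of $S_{\mathrm{mix}}$ with the concentrated coordinates of $S_{\mathrm{tr}}$ never violates any inequality $\lambda_l(j)V_l\ge c$. By contrast the not-mixing half is robust and entirely local to the coordinates of $S_{\mathrm{mix}}$.
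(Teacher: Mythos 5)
Your proposal is correct apart from one endpoint slip, and while it shares the paper's overall architecture (interleaving spread-out coordinates to defeat mixing with coordinates concentrated at $0$ to get transitivity from Corollary~\ref{CorTRANSSUF}), both key steps are carried out by genuinely different arguments. For non-mixing, the paper cites Lemma~\ref{LemMixreverse} (mixing forces $(1-\eta_i)/\alpha_i\to0$), which is why it arranges infinitely many \emph{uniform} coordinates with $\alpha_n$ equal to a fixed finite value $t$; your fibering of $f^{\beta_N}$ over the first $N$ coordinates together with the path-pairing bound
\[
\sup\{\mu(B)\colon B\cap f^{\beta_N}(B)=\emptyset\}\ \le\ \frac{\lceil\alpha_N/2\rceil}{\alpha_N}\ \le\ \frac{2}{3}
\]
is a correct, self-contained substitute, and is in fact stronger: it needs no bound on $\alpha_N$, so \emph{any} infinite set of uniformly weighted coordinates kills mixing. (It is close in spirit to the paper's Lemma~\ref{2ptsNotMixing}, which the paper itself invokes with $a=0$, $b=1$ in the proof of Theorem~\ref{TransExistence2}; you could have quoted that lemma rather than reproving it.) For Condition ($\diamondsuit$), the paper fixes an explicit \emph{slow} concentration rate, $\mu_{n_k}(0)=1-\tfrac{1}{k+3}$ on the $k$-th concentrated coordinate, and checks the infimum by a direct product estimate; you instead run a greedy potential argument on $V_l=\prod_{i<l}\lambda_i(0)$, correctly identifying the binding constraint $\lambda_l(1)V_l=V_l^2/V_{l+1}\ge c$ and pushing $V$ up at the maximal admissible rate, so the concentration is doubly exponential rather than polynomial; your bookkeeping (uniform coordinates give $\lambda_l(j)V_l=V_l\ge1$, concentrated ones give exactly $c$ at $j=1$ and at least $1$ at all other $j$) does yield $\inf_{l,j}\lambda_l(j)V_l\ge c>0$. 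The one correction: you must take $c\in(0,1)$ strictly, since with $c=1$ the recursion $V_{l+1}=V_l^2/c$ started from $V=1$ never grows, leaving $p_l\equiv 1/\alpha$ and destroying the transitivity half; any $c<1$ works exactly as you describe. In exchange for being longer, your route is self-contained, quantitative, and makes transparent precisely how much concentration ($\diamondsuit$) tolerates, whereas the paper's proof is shorter because it leans on its earlier lemmas.
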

One may wonder what happens in the case  $\lim_n \alpha_n = \infty$ as far as the existence of transitive but not mixing operators is concerned. The following theorem shows that one can find such operators when $\{\alpha_n\}$ does not grow too fast.
\begin{Theorem}\label{TransExistence2} {\em (Transitive Existence 2)}
Suppose $\{\alpha_n\}$ is such 
\[  \sup_n \frac{\alpha_{n+1}}{ \prod_{i=0}^{n} \alpha_i } < \infty. \] Then, there exists $\{\mu_i\}$ such that $T_f$ is continuous, topologically transitive and not mixing. 

\end{Theorem}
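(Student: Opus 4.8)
The plan is to construct $\{\mu_i\}$ by interleaving two infinite families of coordinates: a \emph{concentration} family along which $\eta_i\to 1$, which forces transitivity directly through Corollary~\ref{CorTRANSSUF}, and an infinite \emph{blocking} family carrying the uniform measure, whose purpose is to destroy mixing. Write $\Psi(k):=\sup\{\mu(B):B\cap f^{k}(B)=\emptyset\}$. By Theorem~\ref{MixingBDP}, $T_f$ is mixing if and only if $\lim_{k\to\infty}\Psi(k)=1$, so to rule out mixing it is enough to keep $\Psi(k)$ bounded away from $1$ along \emph{some} unbounded sequence of $k$'s, while Corollary~\ref{CorTRANSSUF} gives transitivity as soon as $\limsup_i\eta_i=1$. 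The whole difficulty is then to reconcile these two demands with continuity, i.e. with Condition~($\diamondsuit$).

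The heart of the argument is a blocking estimate at the scales $k=\beta_m$. The mixed-radix representation of $\beta_m$ is a single $1$ in position $m$, so $f^{\beta_m}$ fixes coordinates $0,\dots,m-1$ and acts on the tail $\prod_{i\ge m}A_i$ as the tail odometer's $+1$ map $g_m$. Slicing an arbitrary $B$ with $B\cap f^{\beta_m}(B)=\emptyset$ over the first $m$ coordinates reduces everything to the tail: for each fixed prefix $y$ the slice $B_y$ satisfies $B_y\cap g_m(B_y)=\emptyset$. Writing $C_a:=\{w:(a,w)\in B_y\}$ for the coordinate-$m$ slices and $\nu$ for the product measure on $\prod_{i>m}A_i$, the disjointness forces $C_a\cap C_{a-1}=\emptyset$ for $1\le a\le \alpha_m-1$ (the carry only contributes one additional constraint on the wrap-around pair, which I simply discard). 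Hence $q_a:=\nu(C_a)$ obeys $q_a+q_{a-1}\le 1$; summing these $\alpha_m-1$ inequalities gives $\sum_a q_a\le \tfrac{\alpha_m+1}{2}$, so if $\mu_m$ is uniform then $\nu_{\ge m}(B_y)=\frac{1}{\alpha_m}\sum_a q_a\le \tfrac12+\tfrac{1}{2\alpha_m}$ for every $y$, whence $\mu(B)\le \tfrac12+\tfrac{1}{2\alpha_m}\le \tfrac34$ and therefore $\Psi(\beta_m)\le \tfrac34$. If the blocking coordinates form an infinite set $M$, then $\beta_m\to\infty$ along $M$ while $\Psi(\beta_m)\le\tfrac34$, so $\lim_k\Psi(k)=1$ fails and $T_f$ is not mixing by Theorem~\ref{MixingBDP} (take $\varepsilon=\tfrac14$).

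It remains to choose the concentration coordinates so that $\eta_i\to 1$ along them while ($\diamondsuit$) holds. On each concentration coordinate $n$ I would use a geometric profile $\mu_n(j)\propto r_n^{\,j}$ with $r_n\in(0,1)$, which has full support and satisfies $\eta_n=\mu_n(0)=\frac{1-r_n}{1-r_n^{\alpha_n}}\to 1$ as $r_n\to 0$; driving $r_n\to 0$ along the (infinite) concentration family yields $\limsup_i\eta_i=1$ and hence transitivity by Corollary~\ref{CorTRANSSUF}. Each geometric coordinate contributes $\lambda_i(j)=r_i$ for $j\ge 1$ together with the large factor $\lambda_i(0)=r_i^{-(\alpha_i-1)}$, while each uniform coordinate contributes $\lambda_i(\cdot)=1$; thus in ($\diamondsuit$) the minimal terms are the ones at $j\ge1$ on concentration coordinates, namely $r_l\prod_{i<l}\lambda_i(0)$, and the rapidly growing partial products keep these bounded below provided $r_n$ tends to $0$ slowly relative to the accumulated $\prod_{i<l}\lambda_i(0)$. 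This balancing is exactly where the hypothesis $\sup_n \alpha_{n+1}/\prod_{i\le n}\alpha_i<\infty$ is used: it caps the growth of the factors $r_i^{-(\alpha_i-1)}$ that must be overcome and thereby fixes an admissible schedule of rates $r_n\to 0$ preserving a uniform positive lower bound in ($\diamondsuit$). I expect this simultaneous bookkeeping to be the main obstacle, since the very concentration that produces $\eta_n\to 1$ drives $\lambda_n(1)=r_n$ toward $0$; one must schedule the rates $r_n$, the sparse placement of the blocking coordinates $M$, and the sizes $\alpha_n$ so that $\lambda_l(1)\prod_{i<l}\lambda_i(0)$ never collapses. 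The two remaining technical points are the measurable slice reduction for the non-measure-preserving map $g_m$ and the rigorous handling of the wrap-around/carry term discarded above (which, being an extra constraint, can only lower $\Psi(\beta_m)$); once these are in place, transitivity follows from Corollary~\ref{CorTRANSSUF} and the failure of mixing from Theorem~\ref{MixingBDP}.
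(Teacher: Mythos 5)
Your proposal is correct, and it takes a genuinely different route from the paper. The paper uses a single family of measures doing double duty: after reducing to the case $\alpha_n \ge 4$ (otherwise Theorem~\ref{TransExistence1} applies), every $\mu_n$ places mass $\frac12\left(1-\frac{1}{2^n+1}\right)$ on each of the two points $0,1$ and tiny mass elsewhere. Transitivity then comes from Theorem~\ref{TRANSCHAR} via $\psi_n \ge \mu_n(\{0,1\}) \to 1$ (note that $\eta_n \le \frac12$ there, so Corollary~\ref{CorTRANSSUF} is unusable), non-mixing comes from Lemma~\ref{2ptsNotMixing} applied to the two comparable atoms, and Condition~($\diamondsuit$) is where the growth hypothesis enters, since the small atoms have mass $\frac{1}{(2^n+1)(\alpha_n-2)}$ and the ratios $\lambda_n(j)$ therefore degrade with $\alpha_n$. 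You instead interleave geometric ``concentration'' coordinates (giving $\limsup_i \eta_i = 1$, hence transitivity by Corollary~\ref{CorTRANSSUF}) with uniform ``blocking'' coordinates, and your blocking estimate $\Psi(\beta_m) \le \frac12 + \frac{1}{2\alpha_m} \le \frac34$ is a sound, self-contained replacement for Lemma~\ref{2ptsNotMixing}: the slice reduction is justified coordinate-by-coordinate by Fubini, and discarding the wrap-around carry constraint only discards hypotheses, so the upper bound stands; non-mixing then follows from Theorem~\ref{MixingBDP} with $\varepsilon = \frac14$, since $\beta_m \to \infty$ along the infinitely many blocking coordinates.

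One correction, which works in your favor: you claim the hypothesis $\sup_n \alpha_{n+1}/\prod_{i\le n}\alpha_i < \infty$ is what makes the ($\diamondsuit$) bookkeeping possible, but in your construction it is never needed. On a geometric coordinate, $\lambda_l(j) = r_l$ for $j \ge 1$ (independent of $\alpha_l$) and $\lambda_l(0) = r_l^{-(\alpha_l-1)} \ge 1$; on a uniform coordinate, $\lambda_l \equiv 1$. So if the concentration coordinates are $n_1 < n_2 < \cdots$ and you take, say, $r_{n_k} = \frac{1}{k+1}$, the only possibly small terms in ($\diamondsuit$) are
\[
r_{n_k}\prod_{j<k} r_{n_j}^{-(\alpha_{n_j}-1)} \;\ge\; \frac{1}{k+1}\prod_{j<k}(j+1) \;=\; \frac{k!}{k+1} \;\ge\; \frac12 ,
\]
using only $\alpha_{n_j}\ge 2$, and all other terms are $\ge 1$. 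Hence ($\diamondsuit$) holds with constant $\frac12$ for an arbitrary sequence $\{\alpha_n\}$, while $\eta_{n_k}\ge 1-r_{n_k}\to 1$ still gives transitivity. Your argument therefore proves the stronger statement that transitive, non-mixing operators $T_f$ exist on every odometer; the growth restriction in Theorem~\ref{TransExistence2} is an artifact of the paper's particular two-atom construction, not of the problem.
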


The following condition is necessary for $T_f$ to be topologically transitive. 
\begin{Theorem} \label{TRANSNEC}  {\em (Transitivity Necessary Condition)} 
Suppose $\{\alpha_i\}$ and $\{\mu_i\}$ are such that  $T_f$ is continuous. If  $T_f$ is topologically transitive, then 
$\limsup_{n} {\rho}_n = \infty.$
\end{Theorem}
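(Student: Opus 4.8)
The plan is to deduce this from the transitivity characterization (Theorem~\ref{TRANSCHAR}) by bounding each $\psi_{i,j}$ from above in terms of the distortion ratios $\eta_s/\delta_s$. Write $R_{i,j} := \prod_{s=i}^{j}(\eta_s/\delta_s)$, so that $R_{i,j} = \rho_j/\rho_{i-1}$ with the convention $\rho_{-1} := 1$; since every factor $\eta_s/\delta_s \ge 1$, the sequence $\{\rho_n\}$ is nondecreasing and $R_{i,j} \le \rho_j$. I claim the single estimate
\[ \psi_{i,j} \le \frac{R_{i,j}}{R_{i,j}+1} \qquad (0 \le i \le j) \]
does all the work: if $T_f$ is transitive then Theorem~\ref{TRANSCHAR} provides indices $i_n \le j_n$ with $\psi_{i_n,j_n} \to 1$, whence $R_{i_n,j_n} \to \infty$, hence $\rho_{j_n} \to \infty$, and therefore $\limsup_n \rho_n = \infty$.

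To prove the displayed estimate, fix $i \le j$ and let $A \subseteq \mathcal{A}_{i,j}$ and $k \in \mathcal{A}_{i,j}$ realize the definition of $\psi_{i,j}$, i.e. $(A+k)\cap A = \emptyset$ and $\mu_{i,j}(A) = \psi_{i,j}$. The map $S_k : a \mapsto (a+k)$, taken with carry-over to the right and the final overflow discarded, is nothing but addition by the fixed integer $k$ modulo $\beta_{i,j} := \prod_{s=i}^{j}\alpha_s$ in the mixed-radix identification $\mathcal{A}_{i,j} \cong \{0,\dots,\beta_{i,j}-1\}$; consequently $S_k$ is a bijection of $\mathcal{A}_{i,j}$. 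The key point is that $S_k$ is quasi-invariant for $\mu_{i,j}$ with a controlled Radon--Nikodym factor: for a single point $a$,
\[ \frac{\mu_{i,j}(\{a+k\})}{\mu_{i,j}(\{a\})} = \prod_{s=i}^{j} \frac{\mu_s((a+k)_s)}{\mu_s(a_s)} \ge \prod_{s=i}^{j} \frac{\delta_s}{\eta_s} = \frac{1}{R_{i,j}}, \]
using $\delta_s \le \mu_s(\cdot) \le \eta_s$ coordinatewise. Summing over $a \in A$ and using that $S_k$ is injective yields $\mu_{i,j}(A+k) \ge R_{i,j}^{-1}\,\mu_{i,j}(A)$.

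Finally I combine the two facts. Since $A$ and $A+k$ are disjoint subsets of the probability space $\mathcal{A}_{i,j}$,
\[ 1 \ge \mu_{i,j}(A) + \mu_{i,j}(A+k) \ge \mu_{i,j}(A)\left(1 + R_{i,j}^{-1}\right), \]
so $\psi_{i,j} = \mu_{i,j}(A) \le (1 + R_{i,j}^{-1})^{-1} = R_{i,j}/(R_{i,j}+1)$, exactly the estimate claimed above. The only genuinely non-formal step — and the one I would write out carefully — is the identification of the block addition ``$+k$ with carry, overflow ignored'' with translation modulo $\beta_{i,j}$, since this is what simultaneously guarantees that $S_k$ permutes $\mathcal{A}_{i,j}$ and that the pointwise density ratio telescopes into a product of the per-coordinate bounds $\delta_s/\eta_s$; once this is in place the remainder is the two-line disjointness/quasi-invariance computation. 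I would also note that routing the argument through Theorem~\ref{TRANSCHAR} rather than directly through Theorem~\ref{TransBDP} is precisely what lets us stay inside a finite block $\mathcal{A}_{i,j}$ and avoid tracking how the carry from ``$+k$'' can propagate into arbitrarily high coordinates of the full odometer.
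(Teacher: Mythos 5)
Your proof is correct, and it takes a genuinely different route from the paper's. The paper argues by contradiction on the full odometer: assuming $\sup_n \rho_n \le M$, it first shows that every basic cylinder $D=[d_0,\ldots,d_j]$ satisfies $\mu(f^k(D)) \le \eta_0\cdots\eta_j \le M\,\delta_0\cdots\delta_j \le M\mu(D)$, extends this bound to arbitrary open sets by decomposing them into countably many disjoint cylinders, and then contradicts Lemma~\ref{PROPT}, which uses transitivity (via Theorem~\ref{TransBDP} and tightness of $\mu$) to produce an open set $U$ with $\mu(U)<\varepsilon$ and $\mu(f^k(U))>1-\varepsilon$. You instead route everything through the paper's own Theorem~\ref{TRANSCHAR} and establish the quantitative estimate $\psi_{i,j} \le R_{i,j}/(R_{i,j}+1)$; your two key points --- that block addition with the final carry discarded is translation modulo $\prod_{s=i}^{j}\alpha_s$ in the mixed-radix identification, hence a bijection of ${\mathcal A}_{i,j}$, and that this translation distorts singleton measures by a factor no worse than $R_{i,j}^{-1}=\prod_{s=i}^{j}(\delta_s/\eta_s)$ --- are both sound, and the disjointness inequality $1 \ge \mu_{i,j}(A)+\mu_{i,j}(A+k) \ge \mu_{i,j}(A)\bigl(1+R_{i,j}^{-1}\bigr)$ together with $R_{i,j}\le\rho_j$ and the monotonicity of $\{\rho_n\}$ finishes the argument. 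The two proofs implement the same heuristic (bounded distortion prevents a set of nearly full measure from being moved off itself) at different levels: the paper works with cylinders of the infinite product under $f^k$ and needs only Theorem~\ref{TransBDP}, not the harder ``only if'' half of Theorem~\ref{TRANSCHAR}, so it is more self-contained; your version confines all carry-propagation and compactness issues to the black box of Theorem~\ref{TRANSCHAR}, stays inside finite blocks where everything is purely combinatorial, and in exchange yields the explicit inequality $\psi_{i,j} \le \rho_j/(\rho_j+1)$ (since $x\mapsto x/(x+1)$ is increasing), which quantitatively links the two quantities appearing in the paper's transitivity results and is sharper information than the theorem's qualitative conclusion.
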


For the sake of transparency and concreteness, we  now state Theorem \ref{MixingMainBounded} and Theorem \ref{TransExistence2} in the case where $\alpha_{i} = 2$ for each $i$. In this case, 
${\mathcal A}_{\alpha}$ is the dyadic odometer, topologically a Cantor set. Moreover, for $i \geq 0$, 
\[{\eta}_{i}   =   \max \{{\mu}_{i}(0), {\mu}_{i}(1)\} =  \max \{{\mu}_{i}(0), 1 - {\mu}_{i}(0)\}\]
and, for $n \geq 0$,   
\[\frac{\alpha_{n+1}}{ \prod_{k=0}^{n} \alpha_k } = \frac{2}{2^{n+1}} = \frac{1}{2^{n}}.\] 
Hence, Theorem \ref{MixingMainBounded} and Theorem \ref{TransExistence2} simplify to

\begin{customthm}{3.1'} Suppose $\alpha_i = 2$ for each $i$ and $\{\mu_i\}$ is such that  $T_f$ is continuous. Then, the composition operator $T_{f}$ is topologically mixing if and only 
if $\lim_{i} \max \{{\mu}_{i}(0), 1- {\mu}_{i}(0)\}  = 1.$
\end{customthm}

\begin{customthm}{3.7'}
Suppose $\alpha_i=2$  for each $i$. Then, there exists $\{\mu_i\}$ such that $T_f$ is continuous,  topologically transitive and not mixing.
\end{customthm}

\section{Proof of Mixing Results}
In order to proceed with the proof of Theorem~\ref{MixingMainBounded}, we introduce the following notation: \\

Let $W \subseteq {\mathcal A}$, $k, n \in \N$, $k \geq 1$, and $t \in \{0,1\}$.
Define \[C(k, W, n, t) : = \{x  \in W: 
k+x \text{ has carry over $t$ in the $n^{th}$ position}\},\]
and observe that 
\[W = C(k, W, n, 0) \cup C(k, W, n, 1), \ \ \  \forall n \in {\N}.\]

Below is the proof of the ``if'' part of Theorem~\ref{MixingMainBounded}.
\begin{Lemma} \label{MixingForward} Suppose $\{\alpha_i\}$ and $\{\mu_i\}$ are such that  $T_f$ is continuous.  If $\lim_{i}{\eta}_{i}  = 1$, 
then $T_{f}$ is topologically mixing.  
\end{Lemma}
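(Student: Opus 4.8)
The plan is to verify the mixing characterization of Theorem~\ref{MixingBDP}: given $\eps>0$, I must produce $k_0$ and sets $\{B_k\}_{k\ge k_0}$ with $\mu(\mathcal A\setminus B_k)<\eps$ and $B_k\cap f^k(B_k)=\emptyset$. The key observation is that $f^k(x)=x+(k_0,k_1,\dots)$, so the effect of adding $k$ on a given coordinate depends on whether a carry over arrives from the lower coordinates. This is exactly what the sets $C(k,W,n,t)$ are designed to track. Since $\lim_i\eta_i=1$, for large $n$ each single coordinate measure $\mu_n$ concentrates almost all its mass on one symbol; I will exploit this to build $B_k$ as a cylinder-type set that is ``shifted off itself'' in one well-chosen coordinate.

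Concretely, first I would fix $\eps>0$ and use $\lim_i\eta_i=1$ to choose $N$ so large that $\prod_{i\ge N}\eta_i$ (equivalently $\sum_{i\ge N}(1-\eta_i)$) is as small as desired; let $a_i$ denote the dominant symbol of $\mu_i$ for $i\ge N$, so $\mu_i(a_i)=\eta_i$. For a given $k\ge k_0$ (where $k_0$ forces the representation of $k$ to be supported on coordinates $\ge N$, or at least large), I must find a coordinate $n$ at which adding $k$ genuinely changes the symbol, uniformly over the relevant carry pattern. The natural candidate is to take $n$ to be the highest coordinate where $k$ has a nonzero digit (the ``leading'' position of $k$), since at that position the carry-in behaviour is controlled and the digit is forced to change. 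I would then set $B_k$ to be (essentially) the set of points whose $n$th coordinate equals $a_n$, intersected with the event that no carry disturbs the argument, so that on $B_k$ the $n$th coordinate of $x+k$ differs from $a_n$, forcing $f^k(B_k)\cap B_k=\emptyset$. The decomposition $W=C(k,W,n,0)\cup C(k,W,n,1)$ lets me split according to the incoming carry and handle both cases.

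The complement estimate is where the boundedness hypothesis on $\{\alpha_i\}$ and continuity (Condition~($\diamondsuit$)) enter. Because $\{\alpha_i\}$ is bounded, there are only finitely many symbols per coordinate, so forcing one coordinate to take a prescribed value costs a controlled amount of measure, and $\eta_i\to 1$ guarantees that requiring several high coordinates to equal their dominant symbols removes only $\sum(1-\eta_i)<\eps$ of the mass. I would therefore bound $\mu(\mathcal A\setminus B_k)$ by the sum of $(1-\eta_i)$ over the finitely many high coordinates I constrain, plus the measure of the ``bad carry'' event, and show both go to zero uniformly in $k\ge k_0$ as $N\to\infty$.

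The main obstacle I anticipate is making the construction of $B_k$ \emph{uniform} in $k$ for all $k\ge k_0$ simultaneously, rather than for a single $k$: the leading nonzero position $n=n(k)$ of $k$ varies with $k$ and can be arbitrarily large, so I cannot fix one coordinate once and for all. The resolution is that for large $k$ this leading position lies in the range $i\ge N$ where $\eta_i\approx 1$, so whichever coordinate I am forced to constrain, the measure cost $1-\eta_{n(k)}$ is already small; I must argue that the constraint set, though depending on $k$, always has complement smaller than $\eps$. Controlling the carry-over propagation uniformly — ensuring the incoming carry at position $n(k)$ does not sabotage the coordinate change for every admissible lower-coordinate configuration — will require the careful case analysis via $C(k,W,n,t)$, and this bookkeeping is the technical heart of the argument.
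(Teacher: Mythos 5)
Your overall strategy (apply Theorem~\ref{MixingBDP}, locate the leading nonzero digit position of $k$, fix dominant symbols there, and analyse carries via $C(k,W,n,t)$) is the same as the paper's, but two of your concrete steps fail, and the fix is exactly the one idea your outline flags as ``the technical heart'' without supplying it. First, the measure estimate: you propose to choose $N$ so that $\prod_{i\ge N}\eta_i$ is close to $1$, ``equivalently $\sum_{i\ge N}(1-\eta_i)$'' is small. This does not follow from $\lim_i\eta_i=1$: the series $\sum_i(1-\eta_i)$ may diverge (e.g.\ $\eta_i=1-1/(i+2)$), in which case $\prod_{i\ge N}\eta_i=0$ for \emph{every} $N$, so any $B_k$ built by constraining an unbounded (or tail-indexed) collection of coordinates to their dominant symbols can have measure $0$. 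Relatedly, boundedness of $\{\alpha_i\}$ is not a hypothesis of this lemma and plays no role; invoking it signals the estimate is being forced. The paper avoids all of this by constraining only \emph{two} coordinates: $B_k=\{x\in{\mathcal A}: x_j=a_j,\ x_{j+1}=a_{j+1}\}$, where $j$ is the largest nonzero digit position of $k$, so that $\mu(B_k)=\eta_j\eta_{j+1}>1-\varepsilon$ once $j$ is large; no tail product ever appears.

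Second, the carry analysis: you want to intersect $\{x_n=a_n\}$ with ``the event that no carry disturbs the argument'' and claim its complement goes to zero uniformly in $k\ge k_0$. It does not: whether a carry enters the leading position $j$ is determined by the lower coordinates of $x$ against the lower digits of $k$, and for $k$ whose lower digits are all $\alpha_i-1$, almost every $x$ produces a carry into position $j$, so your ``bad carry'' event can have measure arbitrarily close to $1$. Moreover, your claim that at the leading position ``the digit is forced to change'' is false precisely in the carry case: if the incoming carry is $1$ and $k_j=\alpha_j-1$, then $(a_j+k_j+1)\bmod\alpha_j=a_j$. The paper's resolution is not to exclude the carry but to absorb it: since $j$ is the \emph{largest} nonzero digit position of $k$, we have $k_{j+1}=0$, so if the $j$th coordinate of $f^k(x)$ survives as $a_j$ (which can only happen via a carry out of position $j$), then the $(j+1)$th coordinate becomes $a_{j+1}+1\ne a_{j+1}$. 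Hence $f^k(B_k)\subseteq\{x: x_j\ne a_j\}\cup\{x: x_{j+1}\ne a_{j+1}\}$, a set disjoint from $B_k$, with no carry event removed and no uniformity problem. Until your construction is replaced by this two-coordinate version, the proposal has a genuine gap in both the measure bound and the disjointness argument.
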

\begin{proof} 
We apply Theorem \ref{MixingBDP}. Let $\varepsilon >0$. Let $l$ be such that if $i \geq l$, then ${\eta}_{i}   {\eta}_{{i+1}} > 1- \varepsilon$. 
Let $k_{0} =  {\prod}_{i=0}^{l} {\alpha}_{i}$. Let $k > k_{0}$. Then, the representation of $k$ in terms of $\alpha_i$'s has at least  one non-zero 
element in the ${(l+1)}^{th}$ position or beyond. 
Let $j$ be the largest such index. In particular, $j \geq l$. 
Define 
\[B_{k} = \{(x_{0}, x_{1}, \dots) \in {\mathcal A}: x_{j} = a_{j}, x_{j+1} =a_{j+1}\}, \]
where $\mu_{j}(a_{j}) = {\eta}_{j}$ and $\mu_{j+1}(a_{j+1}) = {\eta}_{j+1}$. 
Then,
$\mu(B_{k}) ={\eta}_{j}   {\eta}_{{j+1}} > 1- \varepsilon$,
implying that \[\mu({\mathcal A} \setminus B_{k}) < \varepsilon.\]  Moreover, we have 
\begin{align*}
f^{k}(B_{k}) & =  f^k( C(k, B_{k}, j, 0)) \cup f^k(C(k, B_{k}, j, 1)) \\
& \subseteq  \{x \in {\mathcal A}: x_{j} \not= a_{j} \} \cup \{x \in {\mathcal A}: x_{j+1} \not=a_{j+1} \}.\\
\end{align*}
Indeed, let $y= (y_{0}, y_{1}, \dots) \in B_{k}$. If $y \in  C(k, B_{k}, j, 0)$, then $f^k(y) \in  \{x \in {\mathcal A}: x_{j} \not= a_{j} \} $. 
If $y \in  C(k, B_{k}, j, 1)$ and $f^k(y) \notin \{x \in {\mathcal A}: x_{j} \not= a_{j} \}$, then $f^k(y)$ has $1 + a_{j+1}$ in the ${(j+1)}^{th}$ position and therefore 
$f^k(y) \in  \{x \in {\mathcal A}: x_{j+1} \not=a_{j+1} \}$. \\
Hence, as
\[B_{k} \cap \left (\{x \in {\mathcal A}: x_{j} \not= a_{j} \} \cup \{x \in {\mathcal A}: x_{j+1} \not=a_{j+1} \} \right)= \emptyset,\]
we have that 
 \[f^{k}(B_{k}) \cap B_{k} = \emptyset.\]
Therefore, we have proved that, in correspondence with $\epsilon$, there exist $k_{0} \ge 1$ and measurable sets $\{B_k\}_{k> k_{0}}^\infty$ such that 
\[\mu(X{\setminus} B_k)< \varepsilon\quad\textrm{and}\quad B_k\cap f^k(B_k)=\emptyset\quad\textrm{for every}\quad k> k_{0}.\]
By Theorem \ref{MixingBDP}, $T_{f}$ is mixing. 
\end{proof}

We will use the following three lemmas in order to prove the ``only if'' part of Theorem \ref{MixingMainBounded}.
\begin{Lemma}\label{2ptsNotMixing} Suppose $\{\alpha_i\}$ and $\{\mu_i\}$ are such that  $T_f$ is continuous. 
Let $l \in {\mathbb N}$. Let
$a, b \in A_l$ be two distinct elements of $A_l$. Let $0 < \varepsilon < \frac{1}{16} \min \{\mu_l(a),\mu_l(b)\}$. Then, there exists $m \ge  {\prod}_{i =0}^{l-1} {\alpha}_{i}$ such 
 that, for all $B \subseteq {\mathcal A}$  with $\mu(B) > 1 - \varepsilon$,
we have that $f^{m}(B) \cap B \not= \emptyset$.
\end{Lemma}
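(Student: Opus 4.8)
The plan is to exhibit a single explicit value of $m$ for which the required non-separation holds, rather than to argue abstractly; the naive attempt of bounding $\mu(f^m(B^c))$ by the measure distortion of $f^m$ and applying a union bound fails, since for a fixed $m$ the distortion can exceed $(1-\varepsilon)/\varepsilon$. Instead I would exploit that, for a cleverly chosen $m$, the map $f^m$ acts on the slice $\{x_l=a\}$ as a \emph{pure translation in the $l$-th coordinate}, fixing every other coordinate. Assume without loss of generality that $a<b$ and set
\[ m:=(b-a)\,\beta_l=(b-a)\prod_{i=0}^{l-1}\alpha_i. \]
Since $1\le b-a\le\alpha_l-1$, this $m$ has $\alpha$-adic digits $m_l=b-a$ and $m_i=0$ for $i\ne l$; in particular $m\ge\beta_l=\prod_{i=0}^{l-1}\alpha_i$, as demanded.

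The first real step is a carry analysis verifying the translation claim. For any $x$ with $x_l=a$, adding $m$ produces no carries: the digits of $m$ below position $l$ vanish, so positions $0,\dots,l-1$ are unchanged and the carry into position $l$ is $0$; position $l$ becomes $a+(b-a)=b<\alpha_l$, again with no carry out; and since the digits of $m$ above $l$ also vanish, positions $>l$ stay put. Hence $f^m$ maps each point of $\{x_l=a\}$ to the point agreeing with it in every coordinate except the $l$-th, whose value is raised from $a$ to $b$. Writing $T:=\prod_{i\ne l}A_i$ with product measure $\nu:=\prod_{i\ne l}\mu_i$ and identifying $\mathcal A\cong A_l\times T$, this says $f^m(a,t)=(b,t)$ for every $t\in T$.

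The concluding step is a routine product-measure estimate. Given $B$ with $\mu(B)>1-\varepsilon$, put $B_a:=\{t\in T:(a,t)\in B\}$ and $B_b:=\{t\in T:(b,t)\in B\}$. Since $\mu_l(a)\,\nu(B_a)=\mu(\{x_l=a\}\cap B)>\mu_l(a)-\varepsilon$, and likewise for $b$, the hypothesis $\varepsilon<\tfrac1{16}\min\{\mu_l(a),\mu_l(b)\}$ gives $\nu(B_a),\nu(B_b)>1-\tfrac1{16}$, whence inclusion--exclusion yields $\nu(B_a\cap B_b)>\tfrac78>0$. Choosing any $t_0\in B_a\cap B_b$, the point $(a,t_0)$ lies in $B$ and its image $f^m(a,t_0)=(b,t_0)$ also lies in $B$, so $f^m(B)\cap B\ne\emptyset$. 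I expect the only delicate point to be the carry bookkeeping that pins down $f^m$ as a coordinate-$l$ translation on the slice; once that is secured the measure computation is immediate, and the constant $\tfrac1{16}$ leaves ample slack.
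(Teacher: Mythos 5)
Your proof is correct and follows essentially the same route as the paper's: the same choice $m=|a-b|\prod_{i=0}^{l-1}\alpha_i$, the same carry-free translation of the $l$-th digit from $a$ to $b$, and the same pigeonhole argument showing the two slices of $B$ must share a point in the remaining coordinates. The only cosmetic difference is that you work with Fubini sections $B_a,B_b\subseteq\prod_{i\ne l}A_i$ (which in fact makes the measure identity an exact equality), whereas the paper uses projections and the inequality $\mu(B_a)\le\mu_l(a)\,\mu_{\mathbb N\setminus\{l\}}(p(B_a))$.
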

\begin{proof}
Assume the hypotheses. Let $m \in \N$ be such that the representation of $m$ has $|a -b|$ in the $l^{th}$ position and zero elsewhere. Note that $m \ge  {\prod}_{i =0}^{l-1} {\alpha}_{i}$.

 Let $B$ be such that $\mu(B) > 1 -\varepsilon$. For each $t\in A_l,$ define $B_t$ as those elements of $B$ which
have $t$ in the $l^{th}$ position. We now claim that $\mu(B_j) \ge \frac{15}{16} \mu_l(j)$, $j \in \{a,b\}$. 
If not, we have that 
\begin{align*}
\mu (B) &= \mu(B_{j}) + \sum_{t \neq j} \mu (B_t) \\
& < \frac{15}{16} \mu_l(j) + \sum_{t \neq j} \mu_l(t)\\
& = 1 -\frac{1}{16} \mu_l(j)\\
& < 1 - \varepsilon,
\end{align*}
contradicting our hypothesis on $B$. 

Let $p: A_{\alpha} \rightarrow {\prod}_{i \in {\mathbb N} \setminus \{l\}} A_{i}$ be the ``natural'' projection, that is $p((x_{0}, x_{1}, \dots, x_{l-1}, x_{l}, x_{l+1}, \dots)) = 
(x_{0}, x_{1}, \dots, x_{l-1}, x_{l+1}, \dots)$, and ${\mu}_{{\mathbb N} \setminus \{l\}}$ be the product measure  $\prod_{i \in {\mathbb N} \setminus \{l\}} {\mu}_{i}$ on ${\prod}_{i \in {\mathbb N} \setminus \{l\}} A_{i}$. Define 
\[C_{a} = p(B_{a}) \text{   and    }C_{b} = p(B_{b}).\]
Hence 
\begin{align*}
\mu(B_{a})  & =  {\mu}_{l}(a) {\mu}_{{\mathbb N} \setminus \{l\}}(C_{a})  >   \frac{15}{16}{\mu}_{l}(a), 
\end{align*}
and 
\begin{align*}
\mu(B_{b})  & =  {\mu}_{l}(b) {\mu}_{{\mathbb N} \setminus \{l\}}(C_{b}) >  \frac{15}{16}{\mu}_{l}(b).
\end{align*}
In particular, 
 \[{\mu}_{{\mathbb N} \setminus \{l\}}(C_{a})  >  \frac{15}{16}  \ \ \ \ \ {\mu}_{{\mathbb N} \setminus \{l\}}(C_{b})  >  \frac{15}{16},\]
implying that  
\[C_{a} \cap C_{b} \not= \emptyset.\]
Let $(x_{0}, x_{1}, \ldots, x_{l-1}, x_{l+1}, \dots) \in C_{a} \cap C_{b}$. 
Then,
\begin{align*}
(x_{0}, x_{1}, \ldots, x_{l-1}, a, x_{l+1}, \dots)  & \in B_{a} \\ (x_{0}, x_{1}, \ldots, x_{l-1}, b, x_{l+1}, \dots)   & \in B_{b}.   
\end{align*}

If $b < a$, then $f^{m}(B_{b})\cap B_{a} \not= \emptyset$ as $(x_{0}, x_{1}, \ldots, x_{l-1}, a, x_{l+1}, \dots) \in f^{m}(B_{b})\cap B_{a}$. Similarly, if $a < b$, then $f^{m}(B_{a})\cap B_{b} \neq \emptyset$. In either case, we have that $f^{m}(B) \cap B \not= \emptyset$, completing the proof. 
\end{proof}
 \begin{Lemma} \label{LemOverlap} Suppose $\{\alpha_i\}$ and $\{\mu_i\}$ are such that  $T_f$ is continuous.  Let $l \in {\mathbb N}$. Let $0 < \varepsilon < \frac{1}{16} \frac{1 - {\eta}_{l}}{ \alpha_l}$. Then, there exists $m \ge  {\prod}_{i =0}^{l-1} {\alpha}_{i}$ such 
 that for all $B \subseteq {\mathcal A}$  with $\mu(B) > 1 - \varepsilon$,
we have that $f^{m}(B) \cap B \not= \emptyset$.
 \end{Lemma}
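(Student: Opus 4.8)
The plan is to reduce Lemma~\ref{LemOverlap} directly to the already-proved Lemma~\ref{2ptsNotMixing}. That earlier lemma delivers exactly the conclusion we want (existence of $m \ge \prod_{i=0}^{l-1}\alpha_i$ forcing $f^m(B)\cap B \ne \emptyset$ for every $B$ of measure $>1-\varepsilon$), but it requires two \emph{distinct} symbols $a,b \in A_l$ whose masses are both bounded below by the quantity controlling $\varepsilon$. So the entire task is to exhibit such a pair $a,b$ with $\min\{\mu_l(a),\mu_l(b)\}$ at least $\frac{1-\eta_l}{\alpha_l}$. Once this is in hand, the hypothesis $\varepsilon < \frac{1}{16}\frac{1-\eta_l}{\alpha_l}$ immediately yields $\varepsilon < \frac{1}{16}\min\{\mu_l(a),\mu_l(b)\}$, and Lemma~\ref{2ptsNotMixing} applies verbatim, producing the desired $m$.

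First I would observe that the statement is vacuous unless $\eta_l < 1$ (otherwise the upper bound on $\varepsilon$ is $0$ and no admissible $\varepsilon$ exists), so I may assume $\eta_l < 1$. Let $a \in A_l$ be a maximizer, so $\mu_l(a) = \eta_l$. The remaining mass $1-\eta_l > 0$ is distributed among the $\alpha_l - 1$ other symbols of $A_l$, so by pigeonhole at least one $b \ne a$ satisfies
\[
\mu_l(b) \;\ge\; \frac{1-\eta_l}{\alpha_l - 1} \;\ge\; \frac{1-\eta_l}{\alpha_l}.
\]
It then remains only to check that the maximizer's mass clears the same bar, i.e. that $\eta_l \ge \frac{1-\eta_l}{\alpha_l}$. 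This is elementary: as $\eta_l$ is the largest of $\alpha_l$ masses summing to $1$, we have $\eta_l \ge 1/\alpha_l$, which rearranges to $(\alpha_l+1)\eta_l \ge 1 \ge 1-\eta_l$, hence $\alpha_l\eta_l \ge 1-\eta_l$.

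Combining, $\min\{\mu_l(a),\mu_l(b)\} = \min\{\eta_l,\mu_l(b)\} \ge \frac{1-\eta_l}{\alpha_l}$, so the chain
\[
\varepsilon < \frac{1}{16}\,\frac{1-\eta_l}{\alpha_l} \le \frac{1}{16}\min\{\mu_l(a),\mu_l(b)\}
\]
places us precisely in the hypotheses of Lemma~\ref{2ptsNotMixing} for the pair $a,b$, and the $m \ge \prod_{i=0}^{l-1}\alpha_i$ it furnishes is the one we need. Since the whole argument is a single reduction, I do not expect any genuine obstacle; the only points requiring a moment's care are the pigeonhole estimate and the inequality $\eta_l \ge \frac{1-\eta_l}{\alpha_l}$, both routine. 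The proof should run to just a few lines.
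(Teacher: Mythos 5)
Your proposal is correct and is essentially the paper's own proof: both pick the maximizer $a$ with $\mu_l(a)=\eta_l$, find a second symbol $b\ne a$ with $\mu_l(b)\ge\frac{1-\eta_l}{\alpha_l-1}$ (the paper takes the second-largest mass, you invoke pigeonhole, which gives the same bound), and then apply Lemma~\ref{2ptsNotMixing}. Your separate verification that $\eta_l\ge\frac{1-\eta_l}{\alpha_l}$ is redundant, since $\mu_l(b)\le\eta_l=\mu_l(a)$ automatically makes $b$ the minimizer, but it is harmless.
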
  
 \begin{proof}
 
Let $a \in A_{l}$ be such that ${\mu}_{l}(a) = {\eta}_{l}$. Let $b \not=a$ such that ${\mu}_{l}(b) = \max \{{\mu}_{l}(i): i \in \{0,1, \dots, {{\alpha}_{l}}-1 \} \setminus \{a\}\}$. 
By our choice of $b$, we have that
\[{\mu}_{l}(b) \geq \frac{1 - {\eta}_{l}}{{\alpha}_{l} -1}.\]
This, together with 
\[\varepsilon < \frac{1}{16} \frac{1 - {\eta}_{l}}{{\alpha}_{l}},\]
implies 
\[\varepsilon < \frac{1}{16}
{\mu}_{l}(b) \le \frac{1}{16}
{\mu}_{l}(a).\] 
Applying Lemma~\ref{2ptsNotMixing} to $l$ and $a,b$ and $\varepsilon$, the conclusion follows.
\end{proof}
The next result follows from Lemma \ref{LemOverlap}.
 \begin{Lemma} \label{LemMixreverse} Suppose $\{\alpha_i\}$ and $\{\mu_i\}$ are such that  $T_f$ is continuous.  If 
  $T_{f}$ is topologically mixing, then  $\lim_{i} \frac{1 -{\eta}_{i}}{\alpha_i} = 0$.
   \end{Lemma}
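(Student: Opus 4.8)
The plan is to argue by contradiction, exploiting the fact that Lemma~\ref{LemOverlap} is essentially the negation of what mixing provides. For each suitable index $l$, Lemma~\ref{LemOverlap} produces a \emph{single} iterate $m$ for which \emph{every} set of measure close to $1$ necessarily meets its own $f^m$-image; mixing, via Theorem~\ref{MixingBDP}, demands the opposite, namely that for \emph{all} sufficiently large iterates a non-overlapping set of large measure exists. The contradiction will come from forcing one of the ``bad'' iterates supplied by Lemma~\ref{LemOverlap} to land in the range where mixing guarantees a ``good'' set.

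Concretely, suppose $T_f$ is mixing but $\lim_i \frac{1-\eta_i}{\alpha_i} \neq 0$. Since $0 \le 1-\eta_i$ and $\alpha_i > 0$, this means $\limsup_i \frac{1-\eta_i}{\alpha_i} > 0$, so there is $\varepsilon_0 > 0$ and an infinite set of indices $l$ with $\frac{1-\eta_l}{\alpha_l} > \varepsilon_0$. I would then fix $\varepsilon := \frac{\varepsilon_0}{32}$, so that $0 < \varepsilon < \frac{1}{16}\frac{1-\eta_l}{\alpha_l}$ for every such $l$. For each of these $l$, Lemma~\ref{LemOverlap} yields an integer $m_l \ge \prod_{i=0}^{l-1}\alpha_i$ such that $f^{m_l}(B)\cap B \neq \emptyset$ for every $B\subseteq \mathcal{A}$ with $\mu(B) > 1-\varepsilon$.

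Next I would invoke mixing through Theorem~\ref{MixingBDP} with this same $\varepsilon$: there exist $k_0 \ge 1$ and measurable sets $\{B_k\}_{k\ge k_0}$ with $\mu(\mathcal{A}\setminus B_k) < \varepsilon$ and $B_k \cap f^k(B_k) = \emptyset$ for all $k \ge k_0$. Because $\alpha_i \ge 2$ forces $\prod_{i=0}^{l-1}\alpha_i \to \infty$, and there are infinitely many admissible $l$, I can choose one with $m_l \ge k_0$. Then $B := B_{m_l}$ satisfies $\mu(B) > 1-\varepsilon$ and yet $B \cap f^{m_l}(B) = \emptyset$, which directly contradicts the conclusion of Lemma~\ref{LemOverlap} for that index $l$. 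This contradiction establishes $\lim_i \frac{1-\eta_i}{\alpha_i} = 0$.

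The only delicate point is the quantifier matching: Lemma~\ref{LemOverlap} asserts the existence of one bad iterate $m_l$ per index $l$, whereas mixing speaks of all iterates beyond a fixed threshold $k_0$. The bridge is the lower bound $m_l \ge \prod_{i=0}^{l-1}\alpha_i$, which grows without bound, so the bad iterates cannot all sit below $k_0$; selecting $l$ large enough places $m_l$ in the regime controlled by mixing. Beyond this bookkeeping the result is an immediate consequence of the preceding lemma, so no substantial obstacle is expected.
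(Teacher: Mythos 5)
Your proposal is correct and follows essentially the same argument as the paper: contradiction via Theorem~\ref{MixingBDP}, using Lemma~\ref{LemOverlap} at a bad index $l$ chosen so large that the resulting iterate $m \ge \prod_{i=0}^{l-1}\alpha_i$ exceeds the mixing threshold $k_0$, then testing $B_m$ against both conclusions. The quantifier bookkeeping you flag as the delicate point is exactly the step the paper handles the same way, so there is nothing to add.
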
 

\begin{proof} To obtain a contradiction, suppose that $\limsup  \frac{1}{16}\frac{1 -{\eta}_{i}}{\alpha_i} > \varepsilon$ for some $\varepsilon >0$. 
As $T_{f}$ is topologically mixing,  by Theorem \ref{MixingBDP}, there exist $k_{0} \geq 1$ and measurable sets 
$\{B_{k}\}_{k=k_{0}}^{\infty}$  such that 
\[\mu({\mathcal A} \setminus B_{k}) < \varepsilon \text{ and } B_{k} \cap f^{k}(B_{k}) = \emptyset\   \text{ for all } k \geq k_{0}.\]
Choose $l$ large enough so that $ {\prod}_{i=0}^{l-1} {\alpha}_{i} >k_{0}$ and  $\frac{1}{16}\frac{1 -{\eta}_{l}}{\alpha_l} > \varepsilon$.  
Applying Lemma \ref{LemOverlap}, there exists  $m \ge  {\prod}_{i =0}^{l-1} {\alpha}_{i} > k_0$ such that, for all $B \subseteq {\mathcal A}$  with $\mu(B) > 1 - \varepsilon$, we have 
that $f^{m}(B) \cap B \not= \emptyset$. 
Hence, as $B_{m}$ has the property that $\mu(B_{m}) > 1 - \varepsilon$, we have that $f^{m}(B_{m}) \cap B_{m} \neq \emptyset$, yielding a contradiction. 
\end{proof}
{\bf Proof of Theorem \ref{MixingMainBounded}. } The
``if'' part follows from Lemma \ref{MixingForward} and the ``only if'' part 
 follows from Lemma \ref{LemMixreverse} as $\{\alpha_i\}$ bounded and $\lim_{i} \frac{1 -{\eta}_{i}}{\alpha_i} = 0$ imply that $\lim_{i}  {\eta}_{i} = 1$.
\qed

{\bf Proof of  Theorem~\ref{MixingExistence}}. Assume $\{\alpha_i\}$ is as in the hypothesis.  For each $n$, let $m_n = 2^{n+1}+1$.  We  define $\mu_n$ by  
\begin{align*}
    \mu_n(0)& = 1 - \frac{1}{m_n}   & \mu_n(i) & = \frac{1}{m_n (\alpha_n -1)}    \ \forall  1 \le i \le \alpha_n -1.
\end{align*} It is clear that $\mu_n$ is a probability measure with support $A_n$. 

We next show that  $\{\mu_i\}$ satisfies Condition ($\diamondsuit$). To this end, observe that for all $n \in \N$ and $0 \le i \le \alpha_n-1$ we have 
\begin{align*}
    \lambda_n(0) &= \left ( 1 - \frac{1}{m_n} \right)  m_n (\alpha_n -1)  &  \lambda_n(i) & \ge \frac{1}{m_n (\alpha_n -1)}.   
\end{align*}
Hence, for all $l \ge 1$ and $j \in A_{l}$, we have that 
\begin{align*}
    \lambda_{l}(j) {\prod}_{i=0}^{l-1} {\lambda}_{i}(0) & \ge \frac{1}{m_l (\alpha_l -1)} {\prod}_{i=0}^{l-1} \left ( 1 - \frac{1}{m_i} \right)  m_i (\alpha_i -1)\\
    & = \frac{1}{2^l} \cdot \frac{{\prod}_{i=0}^{l-1} (m_i-1)}{m_l} \cdot \frac{{\prod}_{i=0}^{l-1} \  2(\alpha_i-1)}{\alpha_l -1}\\
 & \ge \frac{1}{2^{l}} \cdot \frac{2^{l(l+1)/2}}{2^l+1} \cdot \frac{{\prod}_{i=0}^{l-1} \  \alpha_i}{\alpha_l}.
\end{align*}
We note that 
\[\lim_{l \rightarrow \infty} \frac{1}{2^{l}} \cdot\frac{2^{l(l+1)/2}}{2^l+1} = \infty, \]
and by hypothesis
\[ \liminf_l \frac{{\prod}_{i=0}^{l-1} \  \alpha_i}{\alpha_l} > 0,\]
verifying \[\inf _{l \ge 1,  j \in A_l} \lambda_{l}(j) {\prod}_{i=0}^{l-1} {\lambda}_{i}(0) > 0 \]  and Condition ($\diamondsuit$).

Now, by definition, we have that $\eta_n = 1 -2^{-(n+1)}$. Hence, by Lemma~\ref{MixingForward}, we have that $T_f$ is mixing. \\
\qed 

Before we construct Example~\ref{MixingExample}, we prove a lemma.
\begin{Lemma}\label{numbertheory}
Let $l \in \N$. Let $D_l = \{2^i-1: 0 \le i \le l\}$. Then, for each $j>0$, we have that $(D_l+j) \cap D_l$ has at most one element. 
\end{Lemma}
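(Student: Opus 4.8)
The plan is to convert membership in the intersection into a single arithmetic equation and then exploit uniqueness of $2$-adic factorizations. First I would record the reformulation: an element $x$ lies in $(D_l+j)\cap D_l$ exactly when $x\in D_l$ and $x-j\in D_l$, i.e. $x=2^p-1$ and $x-j=2^q-1$ for some indices $0\le p,q\le l$. Subtracting these two equalities, this is equivalent to $j=2^p-2^q$. Since each value $2^p-1$ is distinct for distinct $p$, the assignment $x\mapsto(p,q)$ determined by $x=2^p-1$ and $x-j=2^q-1$ is a well-defined bijection from $(D_l+j)\cap D_l$ onto the set of pairs $(p,q)$ with $2^p-2^q=j$. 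As $j>0$ forces $2^p>2^q$, every such pair satisfies $p>q\ge 0$. Thus it suffices to show that the equation $j=2^p-2^q$ has at most one solution with $p>q\ge 0$.

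The heart of the argument is this uniqueness, and it comes directly from unique factorization. Writing $j=2^q\,(2^{p-q}-1)$ and noting that $2^{p-q}-1$ is odd because $p>q$, I conclude that $2^q$ is precisely the largest power of $2$ dividing $j$; that is, $q$ equals the $2$-adic valuation of $j$ and so is determined by $j$ alone. Once $q$ is fixed, the relation $2^{p-q}=j/2^q+1$ determines $2^{p-q}$, hence $p-q$, and hence $p$, uniquely. Therefore at most one admissible pair $(p,q)$ exists, and by the bijection above the intersection $(D_l+j)\cap D_l$ contains at most one element, which is the claim.

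There is no serious obstacle here; the only points requiring care are bookkeeping ones. I must make sure the correspondence between intersection elements and solution pairs $(p,q)$ is genuinely a bijection (distinct $x$ yield distinct $p$, and $q$ is then forced), and I must use the hypothesis $j>0$ to guarantee $p>q$, since it is exactly the strict inequality that makes $2^{p-q}-1$ odd and lets the $2$-adic valuation read off $q$. (If $p=q$ were allowed the factor would be $0$ and the valuation argument would collapse, but $j>0$ excludes this.) The bound $l$ plays no role beyond restricting the available indices, so the estimate ``at most one'' holds uniformly in $l$.
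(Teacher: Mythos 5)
Your proof is correct, but it takes a genuinely different route from the paper's. The paper argues by contradiction from two elements of the intersection: if $u_1+j=v_1$ and $u_2+j=v_2$ with $v_1<v_2$, writing $v_2=2^d-1$ and $u_2=2^c-1$ (so $d>c$ since $j>0$), subtraction gives $u_2-u_1=v_2-v_1$, while the gap structure of $D_l$ forces $v_2-v_1\ge 2^{d-1}\ge 2^c$ and $u_2-u_1\le 2^c-1$ --- a size contradiction. That argument uses only that $D_l$ is lacunary: the gap below each element exceeds every strictly smaller element, so it would apply verbatim to any set growing at least geometrically, not just to $\{2^i-1\}$. Your argument instead converts membership into the single equation $j=2^p-2^q$ with $p>q\ge 0$ and pins down the pair $(p,q)$ uniquely via the factorization $j=2^q\,(2^{p-q}-1)$ with odd second factor, i.e.\ by reading off the exact power of $2$ dividing $j$. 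This buys something the paper's proof does not: an explicit description of the unique candidate element (the intersection is nonempty precisely when $j\cdot 2^{-q}+1$ is a power of $2$, where $2^q$ is the largest power of $2$ dividing $j$, and then the element is $2^p-1$ with $2^{p-q}=j\cdot 2^{-q}+1$), at the cost of being tied to the arithmetic of powers of $2$. One cosmetic remark: you only need injectivity of the map $x\mapsto(p,q)$, not bijectivity onto all solution pairs (a solution pair with $p>l$ would have no preimage in $D_l$); since the set of admissible pairs has at most one element, injectivity already yields the claim.
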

\begin{proof}
To obtain a contradiction, assume $(D_l+j) \cap D_l$ has at least two elements. Choose $u_1,v_1, u_2, v_2 \in D_l$ with $u_1 +j =v_1$, $u_2 + j = v_2$ and $v_1 < v_2$. Let $v_2 = 2^d-1$ and $u_2 = 2^c-1$. As $j >0$ we have that $d>c$.  Now, subtracting both sides, we have that $u_2-u_1 = v_2 -v_1$.  Note that $v_2 -v_1 \ge 2^d-1 - (2^{d-1} -1) = 2^{d-1} \ge 2^c$ where as $u_2 -u_1  \le 2^c -1$, yielding a contradiction. 
\end{proof}
{\bf Proof of Example \ref{MixingExample}} We let $\alpha_n = 2^{n+2}$,  $D_n =\{2^i-1: 0\le i \le n\}$ and $m_n =2^{n+2}$. We  define $\mu_n$ on $A_n$ by  
\begin{align*} 
    \mu_n(i)& = \left (1 - \frac{1}{m_n} \right ) \frac{1} {|D_n|} \ \ \forall i \in D_n  & \mu_n(i) & = \frac{1} {m_n(\alpha_n -|D_n|)}    \ \forall  i \in A_n \setminus D_n.
\end{align*}
We note that $\mu_n$ is a probability measure which is uniformly distributed on $D_n$ and $A_n \setminus D_n$. It is clear that $\lim_i \eta_i= 0$ as $\lim_i |D_i| = \lim_i (\alpha_i -|D_i|) =  \infty$. It remains to show that $\{\mu_i\}$ satisfies Condition ($\diamondsuit$) and that $T_f$ is mixing.

Let us  next show that  $\{\mu_i\}$ satisfies Condition ($\diamondsuit$). To this end, observe that for all $n \in \N$ and $0 \le i \le \alpha_n-1$ we have 
\begin{align*}
    \lambda_n(0) &= \left ( 1 - \frac{1}{m_n} \right)   \frac{m_n(\alpha_n -|D_n|)}{|D_n|}  &  \lambda_n(i) & \ge \frac{1}{m_n (\alpha_n -|D_n|)}.   
\end{align*}
As before,  we have that, for all $l \ge 1$ and $j \in A_{l}$,
\begin{align*}
    \lambda_{l}(j) {\prod}_{i=0}^{l-1} {\lambda}_{i}(0) & \ge \frac{1}{m_l (\alpha_l -|D_l|)} {\prod}_{i=0}^{l-1} \left ( 1 - \frac{1}{m_i} \right)   \frac{m_i(\alpha_i -|D_i|)}{|D_i|}\\
    & =  \frac{{\prod}_{i=0}^{l-1} (m_i-1)}{m_l} \cdot \frac{{\prod}_{i=0}^{l-1}    \frac{(\alpha_i-|D_i|)}{|D_i|}}{\alpha_l -|D_l|}\\
    & \ge \frac{3}{8} \cdot \frac{\prod_{i=0}^{l-1}3}{2^{l+2}}\\
    & \ge \frac{9}{64},
\end{align*}
verifying Condition ($\diamondsuit$).

We next apply Theorem~\ref{MixingBDP} to show that $T_f$ is mixing. To this end, let
$\varepsilon >0$. Let $l_0 \in \N$ be such that, for all $l \ge l_0$, we have 
\[ \left (1 - \frac{1}{m_l} \right) \left(1 - \frac{1}{m_{l+1}} \right ) \left (\frac{|D_l|-2}{|D_l|} \right)  \left (\frac{|D_{l+1}|-1}{|D_{l+1}|} \right )> 1- \varepsilon. \]
Let $k_0= \prod_{i=0}^{l_0} \alpha_i$. Note that, if $k \ge k_0$, then the representation of $k$ in terms of $\{\alpha_i\}$ has nonzero in some position $l \ge l_0$.

Now, for each $k \ge k_0$, we define $B_k$ so that  $\mu (B_k) \ge 1-\varepsilon$ and $f^k(B_k) \cap B_k = \emptyset$. Fix $k \ge k_0$. Let $l$ be the largest positive integer so that representation of $k$ in terms of $\{\alpha_i\}$ has nonzero in the $l^{th}$-position. Let $j \in A_l$ be the value of $k$ in the $l^{th}$-position. Applying Lemma \ref{numbertheory} to $D_l$, $j$ and $j+1$, we may choose $E_l \subseteq D_l$ so that $|E_l| \ge |D_l| -2$, $ (E_l +j) \cap E_l =\emptyset$ and $(E_l + (j+1) )\cap E_l=\emptyset$. (Here, $+$ is carried out in $\N$.) Let $F_{l+1} = D_{l+1}\setminus \{0\}$. Notice that $(F_{l+1} +1) \cap F_{l+1} = \emptyset$. 

We define
\[B_k= \left \{ \{x_i\} \in {\mathcal A}: x_l \in E_l \ \& 
\ x_{l+1} \in F_{l+1} \right \}.\]
As $l \ge l_0$, we have that 
\begin{align*}
  \mu(B_k) &= \mu_l (E_l) \cdot \mu_{l+1}(F_{l+1})\\
   &\ge \left (1 - \frac{1}{m_l} \right) \left ( \frac{|D_l|-2}{|D_l|} \right)  \left(1 - \frac{1}{m_{l+1}} \right ) \left ( \frac{|D_{l+1}|-1}{|D_{l+1}|} \right) \\
   &> 1- \varepsilon.
\end{align*}
 To complete the proof, we will show that $f^k(B_k) \cap B_k = \emptyset$ by considering two cases.
 
In the first case, assume that $2^l+j+1 \le 2^{l+2}-1$. Let $\{x_i\} \in B_k$. We have that $\{x_i\} \in C(k,B_k,l,0) \cup C(k,B_k,l,1)$. If $\{x_i\} \in C(k,B_k,l,0)$, then $f^k(\{x_i\}) \notin B_k$ as $(E_l +j) \cap E_l = \emptyset$. If $\{x_i\} \in C(k,B_k,l,1)$, then $f^k(\{x_i\}) \notin B_k$ as $(E_l +(j+1)) \cap E_l = \emptyset$. Hence, in this case we have that $f^k(B_k) \cap B_k = \emptyset$.
 
Now, assume that $2^l+j+1 \ge  2^{l+2}$. Note that $j > 2^l$.  Let $\{x_i\} \in B_k$. As $j > 2^l$, we have that $f^k(\{x_i\})$ has the property that its $l^{th}$ coordinate is not in $E_l$ or  its $l^{th}$ coordinate is in $E_l$ and we have a carry over in the $(l+1)^{st}$ position, implying that $(l+1)^{st}$ coordinate of  $f^k(\{x_i\})$ is in $F_{l+1}+1$. Summarizing, we have that
 \begin{align*}
f^{k}(B_{k}) &  \subseteq  \{x \in {\mathcal A}: x_{j} \notin E_j \} \cup \{x \in {\mathcal A}: x_{j+1}\in (F_{l+1}+1) \}.\\
\end{align*}
As $(F_{l+1} +1) \cap F_{l+1} = \emptyset$, we have that $f^{k}(B_{k}) \cap B_{k} = \emptyset$ and completing the proof.
\qed
\section{Proof of Transitivity Results}
	
{\bf Proof of  Theorem \ref{TRANSCHAR}} 
$(\Leftarrow)$ Let $\varepsilon >0$. Choose integers $i \leq j$ so that ${\psi}_{i,j} > 1- \varepsilon$.  Let  $A \subseteq {\prod}_{s=i}^j A_{s}$ and   $h=(h_i,h_{i+1},...,h_j) \in  {\prod}_{s=i}^{j} {A}_{s}$  be such that  $\psi_{i,j}=\mu_{i,j}(A)$ and $(A+h) \cap A = \emptyset$.  Let $k$ be the positive integer whose representation has zero everywhere except in the $i, \ i+1,.., j$-th positions where there are respectively  $h_i,h_{i+1},...,h_j$. Now take $B = \prod_{s=0}^{i-1}{A}_{s} \times A \times  \prod_{s=j+1}^{\infty}{A}_{s}$.
Then, $\mu(B)   =   {\mu}_{i,j}(A) > 1- \varepsilon$. Moreover,  $f^{k}(B)  \cap  B = \emptyset$. Therefore, by Theorem \ref{TransBDP}, $T_{f}$ is transitive.  

$(\Rightarrow)$ Suppose $T_{f}$ is transitive. Fix $\varepsilon >0$. Then there exist  $k\ge 1$ and measurable set $B\subseteq X$ with 
$\mu(X{\setminus} B)< \varepsilon$ and $B\cap f^k(B)=\emptyset$. As $\mu$ is tight, we can assume that $B$ is compact. Now we consider projections of $B$. Recall that $p_{i,j}(B)$ is the projection of $B$ onto $\{i,\ldots ,j \}$ coordinates. For each $i$, let $B_{i} :=  p_{0,i}(B)$. 

We note that $\mu_{0,i}(B_i) \ge \mu(B) > 1-\varepsilon.$ To complete the proof, we show that for some $i$, $(k + B_{i}) \cap B_{i} = \emptyset$, implying that  $\psi_{0,i} > 1 - \varepsilon$. 

Let $l$ be such that the representation of $k$ in $\{\alpha_i\}$ has zero in the $l^{th}$ position and beyond. To obtain a contradiction, assume that, for all $i >l$, there exists $x^{(i)} \in B_{i}$ such that $k + x^{(i)} \in B_{i}$. For each $i$, let $\tilde{{x}}^{(i)} \in B$ be an extension of $x^{(i)}$, that is $p_{0,i}(\tilde{{x}}^{(i)})= x^{(i)}$. Notice that $k + x^{(i)} = p_{0,i}( k + \tilde{{x}}^{(i)})$ for all $i >k$. 
As $B$ is compact, we may choose a subsequence $\{\tilde{x}^{(n_{i})}\}$ of $\{\tilde{x}^{(i)}\}$ which converges to some $x$ in $B$. Moreover, we may require that $p_{0,n_{i}}(x)= p_{0,n_{i}}(\tilde{x}^{(n_{i})}) $ for all $i$. Therefore, for all $i \ge k$
\[ p_{0,n_i} (k +x )  =  k +p_{0,n_{i}}(x)= k+ p_{0,n_{i}}(\tilde{x}^{(n_{i})})  =  k + x^{(n_i)} \in B_{n_i}.\]

 Hence, we have shown that for arbitrary large $i$, we have that
first $n_{i}$ coordinates of $k+x = f^{k}(x)$ equals first $n_{i}$ coordinates of some element of $B$. As $B$ is closed, we have that $f^{k}(x) \in B$. However, this contradicts that $f^{k}(B) \cap B = \emptyset$.
Hence, we have that for some $i$, $(k+B_i) \cap B_i = \emptyset$.\\
\qed

{\bf Proof of Corollary \ref{CorTRANSSUF}}
The corollary follows from the fact that $\eta_i \leq \psi_i$ and Theorem \ref{TRANSCHAR}.\\
\qed 

{\bf Proof of Theorem~\ref{TransExistence1}}
Let $\{\alpha_n\}$ be such that $\liminf_n \alpha_n = t < \infty$. We may choose a strictly increasing sequence of positive integers $\{n_k\}$  such that $\alpha_{n_k} = t$ for all $k$. Moreover, we may also require that for infinitely many $n$'s not belonging to $\{n_k\}$ we also have that $\alpha_n =t$. We define $\mu_n$ in the following fashion: if $n$ does not belong to $\{n_k\}$, then $\mu_n$ has the uniform distribution on $A_n$. For $n =n_k$, we let $\mu_n$ be such that 
\begin{align*}
    \mu_n(0)& = 1 - \frac{1}{m_k}   & \mu_n(i) & = \frac{1}{m_k (\alpha_n -1)}    \ \forall  1 \le i \le \alpha_n -1,
\end{align*}
where $m_k = k+3$.
We note that $\lim_n \frac{1-\eta_n}{\alpha_n} \neq 0$ as there are infinitely many $n$'s for which $\eta_n = 1/t$ and $\alpha_n =t$. We also have that $ \limsup_n \eta _n = 1$. In light of Lemma~\ref{LemMixreverse} and Corollary~\ref{CorTRANSSUF}, $T_f$ is transitive but not mixing, provided that we show that $T_f$ is continuous, or equivalently, that $\{\mu_n\}$ satisfies Condition ($\diamondsuit$). To this end, fix $l \ge 1$ and $j \in A_{l}$. We note that if $l$ does not belong to $\{n_k\}$, then 
\begin{align*}
    \lambda_{l}(j) {\prod}_{i=0}^{l-1} {\lambda}_{i}(0) & =  {\prod}_{k=0}^{p} (m_k-1) (\alpha_{n_k} -1) \ge 1,
\end{align*}
where $p$ is the largest integer so that $n_p \le l-1$.  In the case that $l=n_p$ for some $p$, we have that 
\begin{align*}
    \lambda_{l}(j) {\prod}_{i=0}^{l-1} {\lambda}_{i}(0) & \ge \frac{1}{m_p(\alpha_{n_p}-1)}  {\prod}_{k=0}^{p-1} (m_k-1) (\alpha_{n_k} -1)\\
    & \ge \frac{m_{p-1}-1}{m_p} \cdot \frac{1}{t-1}\\
    &\ge \frac{2}{4}\cdot \frac{1}{t-1}.
\end{align*}
Hence, 
\begin{align*}
   \liminf_{l \ge 1, j \in A_l}  \lambda_{l}(j) \prod_{i=0}^{l-1} {\lambda}_{i}(0) & \ge  \frac{1}{2}\cdot \frac{1}{t-1},
\end{align*}
verifying that $\{\mu\}$ satisfies Condition ($\diamondsuit$) in this case. \\
\qed

{\bf Proof of Theorem~\ref{TransExistence2}}. 
Assume the hypotheses. To avoid trivial cases, we assume that $\alpha_n \ge 4$ for all $n$. (If $\alpha_n < 4$ for infinitely many $n$'s, then we are done by Theorem~\ref{TransExistence1}.)
We define $\mu_n$ on $A_n$ in the following fashion,
\begin{align*}
    \mu_n(i)& = \frac{1}{2} \left (1 - \frac{1}{m_n} \right)  \ \  \forall i \in \{0,1\}   & \mu_n(i) & = \frac{1}{m_n (\alpha_n -2)}    \  \ \forall  i \notin \{0,1\},
\end{align*}
where $m_n = 2^n+1$. 
 Let us show that $\{\mu_n\}$ satisfies Condition ($\diamondsuit$). To this end, observe that for all $n \in \N$ and $1 \le i \le \alpha_n-1$ we have 
\begin{align*}
    \lambda_n(0) &= \frac{1}{2}\left ( 1 - \frac{1}{m_n} \right)  m_n (\alpha_n -2)  &  \lambda_n(i) & \ge \frac{1}{m_n (\alpha_n -2)}.   
\end{align*}
Hence, for all $l \ge 1$ and $j \in A_{l}$, we have that 
\begin{align*}
    \lambda_{l}(j) {\prod}_{i=0}^{l-1} {\lambda}_{i}(0) & \ge \frac{1}{m_l (\alpha_l -2)} {\prod}_{i=0}^{l-1} \  \frac{1}{2}\left ( 1 - \frac{1}{m_i} \right)  m_i (\alpha_i -2)\\
    & = \frac{1}{2^{2l}} \cdot \frac{{\prod}_{i=0}^{l-1} (m_i-1)}{m_l} \cdot \frac{{\prod}_{i=0}^{l-1} \  2(\alpha_i-2)}{\alpha_l -2}\\
    & \ge \frac{1}{2^{2l}} \cdot \frac{2^{l(l-1)/2}}{2^l+1} \cdot \frac{{\prod}_{i=0}^{l-1} \  \alpha_i}{\alpha_l}.
\end{align*}
We note that 
\[\lim_{l \rightarrow \infty} \frac{1}{2^{2l}} \cdot\frac{2^{l(l-1)/2}}{2^l+1} = \infty, \]
and, by hypothesis,
\[ \liminf_l \frac{{\prod}_{i=0}^{l-1} \  \alpha_i}{\alpha_l} > 0,\]
verifying \[\inf _{l \ge 1,  j \in A_l} \lambda_{l}(j) {\prod}_{i=0}^{l-1} {\lambda}_{i}(0) > 0 \]  and Condition ($\diamondsuit$).

We next show that $T_f$ is transitive. Let $D_n \subseteq A_n$ be defined as $\{0,1\}$. As $\alpha_n \ge 4$ for all $n$, we have that $(D_n+2)\cap D_n = \emptyset$. Hence, \[\psi_n \ge \mu_n(D_n) = 1-\frac{1}{m_n} = 1-\frac{1}{2^n+1}.\]
Now, by Theorem~\ref{TRANSCHAR}, it follows that $T_f$ is transitive.

Let us finally show that $T_f$ is not mixing. Let $0 < \varepsilon < \frac{1}{64}$. Note that, for all $l \in \N$, we have that $\mu_l (0) = \mu_l (1) \ge \frac{1}{4}$. Therefore, for all $l$ we have that 
\[ \varepsilon < \frac{1}{16} \min \{\mu_l(0), \mu_l (1)\}.\]
Hence, by Lemma~\ref{2ptsNotMixing}, we have that for every $l$, there exists $m > l$ such that for all $B$ with $\mu(B) > 1 - \varepsilon$ we have that $f^m(B) \cap B \neq \emptyset$. By Theorem~\ref{MixingBDP}, we have that $T_f$ cannot be mixing.
\qed

The following lemma will aid us in the proof of  Theorem~\ref{TRANSNEC}
\begin{Lemma} \label{PROPT} Suppose $\{\alpha_i\}$ and $\{\mu_i\}$ are such that  $T_f$ is continuous. 
If $T_f$  is transitive, then for every $\varepsilon >0$,  there is an open set $U \subseteq {\mathcal A}$ with $\mu (U) < \varepsilon$ and a positive integer $k$ such that $\mu(f^k(U)) > 1-\varepsilon$.
\end{Lemma}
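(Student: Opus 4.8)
The plan is to reformulate the transitivity witness supplied by Theorem~\ref{TransBDP} by passing to complements and exploiting that $f^k$ is a bijection: a set of nearly full measure that is disjoint from its own forward image will have a forward image of \emph{small} measure, so its complement is a small open set whose forward image is nearly all of $\mathcal{A}$.

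First I would fix $\varepsilon>0$ and apply Theorem~\ref{TransBDP}, obtaining a positive integer $k$ and a measurable set $B\subseteq\mathcal{A}$ with $\mu(\mathcal{A}\setminus B)<\varepsilon$ and $B\cap f^k(B)=\emptyset$. Since $\mathcal{A}$ is a compact metric space and $\mu$ a finite Borel measure, $\mu$ is tight; using inner regularity I may shrink $B$ to a compact subset while keeping $\mu(\mathcal{A}\setminus B)<\varepsilon$, and the disjointness $B\cap f^k(B)=\emptyset$ is preserved because shrinking $B$ only shrinks $f^k(B)$. Thus I may assume from the outset that $B$ is compact, exactly as in the proof of Theorem~\ref{TRANSCHAR}.

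Next I would set $U:=\mathcal{A}\setminus B$. As $B$ is compact, hence closed, $U$ is open, and $\mu(U)=\mu(\mathcal{A}\setminus B)<\varepsilon$. The crucial observation is that the disjointness $B\cap f^k(B)=\emptyset$ forces $f^k(B)\subseteq\mathcal{A}\setminus B$, so $\mu\bigl(f^k(B)\bigr)\le\mu(\mathcal{A}\setminus B)<\varepsilon$; that is, the forward image of the \emph{large} set $B$ is in fact small. Finally, since $f$ preserves distance and is a bijection, $f^k$ is a homeomorphism of $\mathcal{A}$; in particular it is a bijection, so $f^k(U)=f^k(\mathcal{A}\setminus B)=\mathcal{A}\setminus f^k(B)$, with $f^k(B)$ compact and hence measurable. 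Therefore $\mu\bigl(f^k(U)\bigr)=1-\mu\bigl(f^k(B)\bigr)>1-\varepsilon$, which is the desired conclusion.

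I do not expect a serious obstacle here: once one sees that $U$ should be taken as the complement of the transitivity witness $B$, the estimate is immediate from the bijectivity of $f^k$ together with the fact that passing to complements interchanges ``small measure'' and ``nearly full measure''. The only point requiring genuine care is that Theorem~\ref{TransBDP} delivers merely a \emph{measurable} $B$, whereas the statement asks for an \emph{open} $U$; this is handled by invoking tightness of $\mu$ to replace $B$ by a compact subset, after which the complement is automatically open and the key inclusion $f^k(B)\subseteq\mathcal{A}\setminus B$ is untouched.
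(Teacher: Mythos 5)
Your proposal is correct and follows essentially the same route as the paper: invoke Theorem~\ref{TransBDP}, use tightness to replace $B$ by a closed (compact) set so that $U=\mathcal{A}\setminus B$ is open, and use bijectivity of $f^k$ to conclude. The only cosmetic difference is that you bound $\mu(f^k(B))<\varepsilon$ and take complements, whereas the paper observes directly that $f^k(U)=\mathcal{A}\setminus f^k(B)\supseteq B$, so $\mu(f^k(U))\ge\mu(B)>1-\varepsilon$; the two computations are interchangeable.
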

\begin{proof} 
Let $\varepsilon >0$. As $T_f$ is transitive, by Theorem~\ref{TransBDP}, we may choose 
$B\subseteq {\mathcal A}$ with $\mu(B) >  1- \varepsilon$ and $B\cap f^k(B)=\emptyset$. 
By the fact that every Borel probability measure is tight, we can choose a closed set $C \subseteq B$ so that $\mu (C) > 1-\varepsilon$. Hence, after renaming, we may assume that $B$ is closed. Let $U = {\mathcal A} \setminus B$. Then, $\mu(U) <\varepsilon$. Moreover, as $f$ is a bijection, since $f=f_{\alpha}$, and $B\cap f^k(B)=\emptyset$, $f^k(U)$ contains $B$ and hence $\mu (f^k(U)) > 1 -\varepsilon$. 
\end{proof}
 {\bf Proof of Theorem~ \ref{TRANSNEC}}  Let $T_{f}$ be transitive and assume, by contradiction, that the sequence $\{\rho_{n}\}_{n=0}^{\infty}$ is bounded. 
Let $M>2$ satisfy 
\[\sup_{n \in \N} \rho_{n} \leq M.\]
Let $D := [{d}_0,{d}_1,\ldots {d}_j]$ be a fixed basic cylinder, then $\mu(f^k(D))\leq M\mu (D)$ for all $k \geq 1$. Indeed, as $f^k(D)$ is a cylinder of the form $[{e}_{0},{e}_{1},\ldots, {e}_j]$, we have \begin{align*}
\mu(f^k(D)) & =   {\mu}_{0}({e}_{0}) {\mu}_{1}({e}_{1})\cdots {\mu}_{j}({e}_{j})\\
&  \leq  {\eta}_{0} {\eta}_{1} \cdots {\eta}_{j} \\
& \leq  M({\delta}_{0} {\delta}_{1} \cdots {\delta}_{j}) \\
& \leq   M({\mu}_{0}(d_0) {\mu}_{1}(d_1) \cdots {\mu}_{j}(d_j))\\
& =  M \mu(D)\\
\end{align*}
As each open set can be written as a countable union of pairwise disjoint basic cylinders, by the additivity property of measures, we have that, for all open sets $U$ and all positive integers $k$, 
\[\mu(f^k(U))  \le M \mu(U).\]
Now, let $\varepsilon: = 1/M^2$, and U be any open set with $\mu (U) <\varepsilon$. Then, for all $k \geq 1$,  \[\mu(f^k(U)) < M \varepsilon = 1/M < 1 - \varepsilon.\]
 By Lemma \ref{PROPT} $T_{f}$ cannot be transitive, yielding a contradiction. 
\qed 
\bibliographystyle{abbrv}
\bibliography{Chaos}

\begin{thebibliography}{10}

\bibitem{BDP}
F.~Bayart, U.~B. Darji, and B.~Pires.
\newblock Topological transitivity and mixing of composition operators.
\newblock {\em J. Math. Anal. Appl.}, 465(1):125--139, 2018.

\bibitem{BM}
F.~Bayart and E.~Matheron.
\newblock {\em Dynamics of linear operators}, volume 179 of {\em Cambridge
  Tracts in Mathematics}.
\newblock Cambridge University Press, Cambridge, 2009.

\bibitem{BM2}
F.~Bayart and E.~Matheron.
\newblock ({N}on-)weakly mixing operators and hypercyclicity sets.
\newblock {\em Ann. Inst. Fourier (Grenoble)}, 59(1):1--35, 2009.

\bibitem{BR}
F.~Bayart and I.~Z. Ruzsa.
\newblock Difference sets and frequently hypercyclic weighted shifts.
\newblock {\em Ergodic Theory Dynam. Systems}, 35(3):691--709, 2015.

\bibitem{BerMess}
J.~{Bernardes}, Nilson~C. and A.~{Messaoudi}.
\newblock {Shadowing and structural stability in linear dynamical systems}.
\newblock {\em arXiv e-prints}, page arXiv:1902.04386, Feb 2019.

\bibitem{BDP2}
N.~C. Bernardes, U.~B. Darji, and B.~Pires.
\newblock Li-yorke chaos for composition operators on $l^p$-spaces, 2018.

\bibitem{BBM}
N.~C. Bernardes, Jr., A.~Bonilla, V.~M\"{u}ller, and A.~Peris.
\newblock Li-{Y}orke chaos in linear dynamics.
\newblock {\em Ergodic Theory Dynam. Systems}, 35(6):1723--1745, 2015.

\bibitem{BCDMP}
N.~C. Bernardes, Jr., P.~R. Cirilo, U.~B. Darji, A.~Messaoudi, and E.~R.
  Pujals.
\newblock Expansivity and shadowing in linear dynamics.
\newblock {\em J. Math. Anal. Appl.}, 461(1):796--816, 2018.

\bibitem{BK}
L.~Block and J.~Keesling.
\newblock A characterization of adding machine maps.
\newblock {\em Topology Appl.}, 140(2-3):151--161, 2004.

\bibitem{BK2}
L.~S. Block and W.~A. Coppel.
\newblock {\em Dynamics in one dimension}, volume 1513 of {\em Lecture Notes in
  Mathematics}.
\newblock Springer-Verlag, Berlin, 1992.

\bibitem{BDD}
D.~Bongiorno, U.~B. Darji, and L.~Di~Piazza.
\newblock Rolewicz-type chaotic operators.
\newblock {\em J. Math. Anal. Appl.}, 431(1):518--528, 2015.

\bibitem{CS}
G.~Costakis and M.~Sambarino.
\newblock Topologically mixing hypercyclic operators.
\newblock {\em Proc. Amer. Math. Soc.}, 132(2):385--389, 2004.

\bibitem{DDS}
E.~D'Aniello, U.~B. Darji, and T.~H. Steele.
\newblock Ubiquity of odometers in topological dynamical systems.
\newblock {\em Topology Appl.}, 156(2):240--245, 2008.

\bibitem{DHS}
E.~D'Aniello, P.~D. Humke, and T.~H. Steele.
\newblock The space of adding machines generated by continuous self maps of
  manifolds.
\newblock {\em Topology Appl.}, 157(5):954--960, 2010.

\bibitem{DS}
E.~D'Aniello and T.~H. Steele.
\newblock Prevalance and structure of adding machines for cellular automata.
\newblock {\em J. Math. Anal. Appl.}, 352(2):856--860, 2009.

\bibitem{DASI}
A.~I. Danilenko and C.~E. Silva.
\newblock Ergodic theory: non-singular transformations.
\newblock In {\em Mathematics of complexity and dynamical systems. {V}ols.
  1--3}, pages 329--356. Springer, New York, 2012.

\bibitem{DH}
A.~H. Dooley and T.~Hamachi.
\newblock Nonsingular dynamical systems, {B}ratteli diagrams and {M}arkov
  odometers.
\newblock {\em Israel J. Math.}, 138:93--123, 2003.

\bibitem{gm}
S.~Grivaux and E.~Matheron.
\newblock Invariant measures for frequently hypercyclic operators.
\newblock {\em Adv. Math.}, 265:371--427, 2014.

\bibitem{G0}
K.-G. Grosse-Erdmann.
\newblock Hypercyclic and chaotic weighted shifts.
\newblock {\em Studia Math.}, 139(1):47--68, 2000.

\bibitem{GP}
K.-G. Grosse-Erdmann and A.~Peris~Manguillot.
\newblock {\em Linear chaos}.
\newblock Universitext. Springer, London, 2011.

\bibitem{HOC}
M.~Hochman.
\newblock Genericity in topological dynamics.
\newblock {\em Ergodic Theory Dynam. Systems}, 28(1):125--165, 2008.

\bibitem{MP}
F.~Mart\'{\i}nez-Gim\'{e}nez and A.~Peris.
\newblock Chaos for backward shift operators.
\newblock {\em Internat. J. Bifur. Chaos Appl. Sci. Engrg.}, 12(8):1703--1715,
  2002.

\bibitem{ORN}
D.~S. Ornstein.
\newblock On invariant measures.
\newblock {\em Bull. Amer. Math. Soc.}, 66:297--300, 1960.

\bibitem{RO}
S.~Rolewicz.
\newblock On orbits of elements.
\newblock {\em Studia Math.}, 32:17--22, 1969.

\bibitem{S1}
H.~N. Salas.
\newblock Hypercyclic weighted shifts.
\newblock {\em Trans. Amer. Math. Soc.}, 347(3):993--1004, 1995.

\bibitem{S2}
H.~N. Salas.
\newblock Supercyclicity and weighted shifts.
\newblock {\em Studia Math.}, 135(1):55--74, 1999.

\bibitem{SM}
R.~K. Singh and J.~S. Manhas.
\newblock {\em Composition operators on function spaces}, volume 179 of {\em
  North-Holland Mathematics Studies}.
\newblock North-Holland Publishing Co., Amsterdam, 1993.

\end{thebibliography}
\end{document}